

\documentclass[journal,twoside,web]{ieeecolor}

\makeatletter
\let\NAT@parse\undefined
\makeatother

\usepackage{generic}
\usepackage{amsfonts,amsopn,amsmath,amssymb}
\makeatletter
\@ifclassloaded{ieeecolor}{
  \usepackage{etoolbox}
  \providecommand{\refname}{References}
  \patchcmd{\thebibliography}{\section*{\color{black}}}{\section*{\refname}}{}{}
  \let\oldmaketitle\maketitle
  \def\maketitle{%
    \let\oldcenterline\centerline
    \def\centerline##1{\par\oldcenterline{##1}}%
    \oldmaketitle
  }
}{
  \usepackage{xcolor}
}
\makeatother
\usepackage{supertabular,booktabs,array}
\usepackage{flushend}
\usepackage{graphicx}
\usepackage{float}
\usepackage[mathscr]{eucal}
\usepackage{hyperref,cleveref}

\newtheorem{theorem}{Theorem}
\newtheorem{lemma}{Lemma}
\newtheorem{corollary}{Corollary}
\newtheorem{definition}{Definition}
\newtheorem{example}{Example}


\def\BibTeX{{\rm B\kern-.05em{\sc i\kern-.025em b}\kern-.08em
    T\kern-.1667em\lower.7ex\hbox{E}\kern-.125emX}}
\markboth{\hskip25pc IEEE TRANSACTIONS AND JOURNALS TEMPLATE}
{João Sousa-Pinto, Dominique Orban: Dual-Regularized Riccati Recursions for Interior-Point Optimal Control}

\begin{document}

\title{Dual-Regularized Riccati Recursions for Interior-Point Optimal Control}

\author{
  \href{mailto:joaospinto@gmail.com}{João Sousa-Pinto}
  \and \qquad
  \href{mailto:dominique.orban@polymtl.ca}{Dominique Orban}
}
\makeatother

\ifpdf
\hypersetup{
  pdftitle={Dual-Regularized Riccati Recursions for Interior-Point Optimal Control},
  pdfauthor={J. Sousa-Pinto, D. Orban}
}
\fi

\maketitle

\begin{abstract}
We derive closed-form extensions of the sequential~\cite{lqr} and parallel~\cite{lqr-gpu}
Riccati recursions for solving \emph{dual-regularized} linear-quadratic regulator (LQR) problems,
with $O(N)$ sequential time and $O(\log(N))$ parallel time, respectively.
We show that these subproblems arise when using regularized primal-dual interior-point methods
to solve smooth, constrained, non-convex, discrete-time optimal control problems via multiple-shooting,
even in the presence of stagewise equality or inequality constraints, and
without imposing any rank requirements on constraint Jacobians.
We prove that, when certain inertia conditions on the Newton-KKT matrix are met,
each nonzero primal step is a descent direction of an
augmented barrier-Lagrangian merit function.
We characterize these inertia conditions in terms of the positive-definiteness of the
dual-regularized Riccati pivots (a weaker condition than the standard LQR positive-definiteness requirements),
thereby yielding inexpensive certificates of the required inertia.
We provide MIT-licensed implementations of our methods in C++~\cite{sip,sipoc} and in JAX~\cite{lipa,rlqr-jax},
as well as a full formalization of our results in Lean~\cite{rlqr-lean}.
We benchmark our algorithm against leading optimal control and nonlinear programming solvers
on complex trajectory optimization problems, establishing competitive performance on moderate problems and
substantial gains as the horizon length, problem dimension, and constraint count increase.
\end{abstract}

\begin{keywords}
Optimal Control, Numerical Optimization, Interior Point Method
\end{keywords}


\section{Introduction}\label{introduction}

\subsection{Numerical Optimal Control}\label{numerical-optimal-control-intro}

Optimal control theory studies optimization problems in which
the cost functional being optimized depends on the state ($x$) and control ($u$) time series
of a dynamical system, whose dynamics are described by an ordinary differential
equation $\dot{x}(t) = f(x(t), u(t), t)$ (in the case of continuous-time problems)
or by a difference equation $x_{n+1} = f_n(x_n, u_n)$
(in the case of discrete-time problems).

Numerical optimal control, both real-time and offline, has found numerous applications,
ranging from trajectory optimization for robotics (e.g. for autonomous cars,
unmanned aerial vehicles, legged robots) and aerospace (e.g. rockets, satellites)
to the control of power systems (e.g. power plants, electrical grid).

Continuous-time optimal control problems, whose optimization variables are functions
(and thus infinite-dimensional), are typically converted into finite-dimensional optimization problems
by either shooting (i.e. replacing $x(t), u(t)$ with sampled values $(x_i)_{i=0}^N, (u_i)_{i=0}^{N-1}$)
or collocation (i.e. expressing $x(t), u(t)$ as finitely parameterized functions such as splines).
This process is called transcription.
When the problem is transcribed via a shooting method and an explicit integration scheme
is used to discretize the dynamics, the resulting optimization problem is a discrete-time
optimal control problem.

Depending on the application domain, different transcription mechanisms may be more suitable.
In robotics, where optimal control problems are often solved at a high frequency,
schemes resulting in faster optimization times are often favored over those that may
yield more precise solutions at the cost of slower optimization times. The opposite may be true
of other application domains, such as offline trajectory optimization for satellites or rockets.

Henceforth, we shall only discuss discrete-time optimal control problems. Solution mechanisms
for these problems can be roughly divided into two groups: single-shooting and multiple-shooting
methods. In the case of single-shooting, only the control variables $u_i$ are free optimization
variables, and the state variables $x_i$ are treated as dependent variables; in particular, the
dynamics of the system are satisfied at every iterate. However, this comes at the cost of making
it substantially harder to warm-start the optimization process.
The most common single-shooting methods are the Differential Dynamic Programming (DDP)
algorithm~\cite{ddp} and the Stagewise Newton algorithm~\cite{snewton}.
In multiple-shooting methods, some or all of the state variables $x_{n+1}$
are also treated as independent variables, and the dynamics equations
$x_{n+1} = f_n(x_n, u_n)$ for such $n$ are enforced as general constraints.
This formulation has the advantage of being easy to warm-start and easily
paired with generic optimizers of nonlinear programs.
However, using optimal-control-specific subproblem solvers is crucial for performance.

When the dynamics of the system are affine, the costs are stagewise convex quadratics,
strictly convex with respect to the control variables, and no further constraints are present,
unconstrained discrete-time optimal control problems can be solved exactly
in a single backward and forward pass, as first shown in~\cite{lqr}.
Such problems are called linear-quadratic regulator (LQR) problems.
The backward pass computes an optimal affine control policy $u_i = K_i x_i + k_i$
in $O(N)$ time, and the forward pass alternates between computing the next state
by applying the dynamics law and computing the next control by applying the control law.
The closed-form equations that produce this backward pass are called the sequential Riccati recursion.
This remains true in the presence of stagewise affine constraints, as first shown in~\cite{lclqr}.

The parallel Riccati recursion, first derived in~\cite{lqr-gpu}, replaces the backward and forward passes
with reverse and forward associative scans, respectively, enabling the solution of LQR problems in
$O(\log(N))$ parallel time. This requires reframing the derivation in terms of interval value functions,
rather than cost-to-go functions.

The canonical method for applying an interior-point method (IPM) to constrained discrete-time
optimal control problems via multiple shooting, first described in~\cite{mpc-ipm}, consists of exploiting
the block sparsity of the resulting Newton-KKT systems by performing block elimination of all slack variables
and all dual variables associated with inequality constraints.
This transformation converts the Newton-KKT systems of the constrained problems into first-order optimality
conditions associated with LQR problems (with affine stagewise constraints,
when the full problem has stagewise equality constraints). Then, a Riccati recursion
can be used to solve the post-elimination linear systems, and the remaining variables can be recovered from the
block-elimination equations.

\subsection{Motivation}\label{motivation}

Regularized and proximal variants of the primal-dual interior-point method are commonly
used to improve robustness on nonconvex and degenerate constrained optimization
problems. By modifying the Newton-KKT systems with primal and/or dual
regularization, these methods can enforce the inertia conditions needed for
merit function descent even when the unregularized systems are singular,
ill-conditioned, or fail to have the desired inertia.

In the regularized and proximal versions of the interior-point method, dual-variable regularization $-\frac{1}{\eta}$
is added to the diagonal of the dual-variable blocks of the left-hand side of the Newton-KKT linear systems posed at each iteration.

In the regularized IPM case, the primal component of the Newton-KKT solution is guaranteed to descend an
augmented barrier-Lagrangian merit function with penalty parameter $\eta$.
In the proximal IPM case, the full primal-dual Newton-KKT solution is guaranteed to descend
an augmented barrier-Lagrangian merit function with the dual-regularization terms introduced in~\cite{pdal},
also with penalty parameter $\eta$.

In either case, the block-elimination procedure from~\cite{mpc-ipm} produces first-order optimality
conditions associated with \textit{dual-regularized} LQR problems, which are the key object of this paper.
When stagewise equality constraints are present, an extra step for block-eliminating the
corresponding dual variables is required; this step becomes possible due to the presence of the
diagonal $-\frac{1}{\eta}$ terms.

\subsection{Contributions}\label{contributions}

The main contribution of this paper is a set of efficient, closed-form solutions for
dual-regularized LQR problems, both sequential (time complexity $O(N)$)
and parallel (parallel time $O(\log(N))$).

We also provide a substantial speedup over the parallel algorithm introduced in~\cite{lqr-gpu}
to solve standard (i.e. not dual-regularized) LQR problems. Specifically, we show that each
interval value function combination step can be done with a single matrix factorization.

We prove that the inertia condition needed for descent of the regularized
IPM merit function can be certified exactly from the Riccati factorization:
the full Newton-KKT system has the desired inertia if and only if the
dual-regularized Riccati pivots are positive-definite.

Finally, we release MIT-licensed implementations of all the methods described herein,
both in C++~\cite{sip,sipoc} and in JAX~\cite{lipa,rlqr-jax},
paired with benchmarks that compare them against state-of-the-art
packages for solving numerical optimal control problems.

Formal proofs (in Lean) of all the results from this paper are available in~\cite{rlqr-lean}.

\subsection{Related Work}\label{related-work}

The primal-dual constrained DDP~\cite{pd-ddp} and ProxDDP~\cite{prox-ddp} methods
are the closest to our work, as the Newton-KKT systems they pose can be interpreted as
first-order optimality conditions of dual-regularized LQR problems,
even though they are not interior-point methods.
Unlike them, we do not solve the Newton-KKT subproblems by
factoring the full per-stage components of the primal-dual KKT matrices.
Instead, we block-eliminate all dual variables (except for the costates
and the initial state constraint Lagrange multiplier), and reduce the subproblem to a
dual-regularized LQR problem, to which we then provide efficient closed-form solutions.
As a result, the per-stage matrices factored by our backward pass are substantially smaller:
they depend only on the state and control dimensions, and not on the number of constraints,
and the same algebra admits an efficient parallel solution scheme.

FATROP~\cite{fatrop} uses an IPOPT~\cite{ipopt}-inspired primal-dual interior-point method with Hessian regularization
and a structure-exploiting Riccati-type linear solver capable of handling stagewise equality constraints,
but imposes full row-rank requirements on the constraint Jacobians.
In contrast, we use a regularized interior-point method, and the dual-regularization of our Newton-KKT systems
allows them to be reduced to dual-regularized LQR problems (even in the presence of stagewise equality constraints),
without imposing rank requirements on the constraint Jacobians.

The regularized IPM used in this paper is closely related to the mixed logarithmic
barrier-augmented Lagrangian methods studied for general nonlinear programs
in~\cite{armand-omheni2017mixed,armand-tran2019rapid,nguyen2025convergence},
where strong global convergence properties and local superlinear convergence are established,
to both feasible and infeasible stationary points.

An efficient and exact parallel algorithm for standard (i.e. not dual-regularized) LQR problems
was first established in~\cite{lqr-gpu}.

\section{Background}\label{background}

\subsection{Dual-Regularized LQR}\label{dual-regularized-lqr}

\begin{definition}
A dual-regularized LQR problem is an optimization problem of the form

\begin{equation}
\begin{aligned}
\label{min-max-with-ys}
\max \limits_{y_0, \ldots, y_N} \min \limits_{x_0, u_0, \ldots, x_N}
\mathcal{R}(x_0, u_0, \ldots, x_N, y_0, \ldots, y_N),
\end{aligned}
\end{equation}
where
\begin{equation*}
\begin{aligned}
& \mathcal{R}(x_0, u_0, \ldots, x_N, y_0, \ldots, y_N) = \\
& \sum \limits_{i=0}^{N-1}
\left( \frac{1}{2} x_i^T Q_i x_i + x_i^T M_i u_i + \frac{1}{2} u_i^T R_i u_i
+ q_i^T x_i + r_i^T u_i \right) \\
& + \frac{1}{2} x_N^T Q_N x_N + q_N^T x_N
- \sum \limits_{i=0}^{N} \frac{1}{2} y_i^T \Delta_i y_i
+ y_0^T (c_0 - x_0) \\
& + \sum \limits_{i=0}^{N-1} \left( y_{i+1}^T (A_i x_i + B_i u_i + c_{i+1} - x_{i + 1}) \right) .
\end{aligned}
\end{equation*}

The matrices
\begin{equation*}
\begin{aligned}
\begin{bmatrix}
Q_i & M_i \\
M_i^T & R_i
\end{bmatrix}
\end{aligned}
\end{equation*}
are assumed to be symmetric and positive semi-definite,
and the matrices $R_i$ and $\Delta_i$ are assumed to be symmetric and positive-definite.
Typically, the matrices $\Delta_i$ would also be diagonal,
but we do not impose this requirement.
\end{definition}

When $\Delta = 0$, these problems become standard (i.e. not dual-regularized) LQR problems.

The first-order optimality conditions of a dual-regularized LQR problem are a linear system of the form
\begin{equation}
\begin{aligned}
\label{primal-dual-linear-system}
\begin{bmatrix}
P & C^T \\
C & -\Delta
\end{bmatrix}
\begin{bmatrix}
x \\
y
\end{bmatrix} = -
\begin{bmatrix}
s \\
c
\end{bmatrix},
\end{aligned}
\end{equation}
where
\begin{equation}
\begin{aligned}
\label{optimal-control-specialization}
P &=
\begin{bmatrix}
P_0 & & \\
& \ddots & \\
& & P_N
\end{bmatrix}, \quad
\Delta = \begin{bmatrix}
\Delta_0 & & \\
& \ddots & \\
& & \Delta_N
\end{bmatrix} , \\
P_i &=
\begin{cases}
\begin{bmatrix}
Q_i & M_i \\
M_i^T & R_i
\end{bmatrix}, & \text{if } 0 \leq i < N, \\
Q_i, & \text{if } i = N,
\end{cases} \\
C &=
\begin{bmatrix}
-I  &     &     &        &         &         & \\
A_0 & B_0 &  -I &        &         &         & \\
    &     &     & \ddots &      -I &         & \\
    &     &     &        & A_{N-1} & B_{N-1} & -I
\end{bmatrix}, \\
s &=
\begin{bmatrix}
q_0 \\
r_0 \\
\vdots \\
q_{N-1} \\
r_{N-1} \\
q_N
\end{bmatrix},
c =
\begin{bmatrix}
c_0 \\
c_1 \\
\vdots \\
c_N
\end{bmatrix},
x =
\begin{bmatrix}
x_0 \\
u_0 \\
\vdots \\
x_{N - 1} \\
u_{N - 1} \\
x_N
\end{bmatrix},
y =
\begin{bmatrix}
y_0 \\
y_1 \\
\vdots \\
y_N
\end{bmatrix} .
\end{aligned}
\end{equation}

Note that~\cref{primal-dual-linear-system} is well-defined even when the positive-definiteness
assumptions mentioned above are not met. In fact, we will want to solve~\cref{primal-dual-linear-system}
even in this case. This will be discussed further in~\cref{riccati-rational-identity-theorem}.

\subsection{Optimal Control}\label{optimal-control}

\begin{definition}
A discrete-time optimal control problem is an optimization problem of the form

\begin{equation*}
\begin{aligned}
&\min_{x_0, u_0, \ldots, x_N} &\sum \limits_{i=0}^{N-1} f_i(x_i, u_i) + f_N(x_N) \\
&\mbox{s.t.} & x_0 = s_0, \\
& & \forall i \in \lbrace 0, \ldots, N-1 \rbrace, x_{i+1} = d_i(x_i, u_i),  \\
& & \forall i \in \lbrace 0, \ldots, N-1 \rbrace, c_i(x_i, u_i) = 0, \\
& & \forall i \in \lbrace 0, \ldots, N-1 \rbrace, g_i(x_i, u_i) \leq 0, \\
& & c_N(x_N) = 0, \\
& & g_N(x_N) \leq 0 .
\end{aligned}
\end{equation*}

The variables $x_i, u_i$ represent the states and controls;
the functions $d_i, f_i, c_i, g_i$ are the dynamics, costs, equality constraints,
and inequality constraints (respectively);
$s_0$ is the fixed initial state; $N$ is the number of stages.
\end{definition}

\subsection{The Regularized Interior Point Method}\label{interior-point}

The regularized interior-point method (IPM) solves optimization problems of the form

$$\min\limits_{x} f(x) \qquad \mbox{s.t.}
  \quad c(x) = 0 \wedge g(x) + s = 0 \wedge s \geq 0,$$

where the functions $f, c, g$ are continuously differentiable.

\begin{definition}
The Barrier-Lagrangian is defined as
\begin{equation*}
\begin{aligned}
\mathcal{L}(x, s, y, z; \mu) = & f(x) - \mu \sum \limits_{i} \log(s_i) \\
& + y^T c(x) + z^T (g(x) + s)
\end{aligned}
\end{equation*}
\end{definition}

\begin{definition}
If $M$ is symmetric and positive-definite, the squared $M$-norm is defined as
$\lVert x \rVert_{M}^2 = x^T M x$.
\end{definition}

\begin{definition}
The augmented barrier-Lagrangian is defined as
\begin{equation*}
\begin{aligned}
& \mathcal{A}(x, s, y, z; \mu, \Delta_C, \Delta_G) \\
= & \mathcal{L}(x, s, y, z; \mu) + \frac{1}{2} (\lVert c(x) \rVert_{\Delta_C}^2 + \lVert g(x) + s\rVert_{\Delta_G}^2),
\end{aligned}
\end{equation*}
where $\Delta_C$ and $\Delta_G$ are symmetric positive-definite matrices.
\end{definition}

Frequently, $\Delta_C$ and $\Delta_G$ are set to $\eta I$, where $\eta > 0$.
However, having separate penalty parameters for different constraints
may promote numerical stability and faster convergence.

We call $x, s$ the primal variables and $y, z$ the dual variables.
As usual, we require that $s, z > 0$ always hold.

At each iteration of the regularized interior-point method, the search direction
$(\Delta x, \Delta s, \Delta y, \Delta z)$ is computed by solving the linear system

\begin{equation}
\begin{aligned}
\label{ipm-4x4-newton-kkt}
\begin{bmatrix}
P & 0 & C^T & G^T \\
0 & W^{-1} & 0 & I \\
C & 0 & -\Delta_C^{-1} & 0 \\
G & I & 0 & -\Delta_G^{-1}
\end{bmatrix}
\begin{bmatrix}
\Delta x \\
\Delta s \\
\Delta y \\
\Delta z
\end{bmatrix}
= - \nabla \mathcal{L},
\end{aligned}
\end{equation}
where
$\nabla \mathcal{L}$ is evaluated at $(x, s, y, z; \mu)$,
$P = \nabla^2_{xx} \mathcal{L}(x, s, y, z; \mu)$ (or any symmetric approximation thereof),
$C = J(c)(x)$, $G = J(g)(x)$,
$S, Z$ denote the diagonal matrices with entries $s, z$ (respectively),
and $W = Z^{-1} S$ (or any symmetric positive-definite approximation thereof).

The variable $\Delta s$ can be eliminated from~\cref{ipm-4x4-newton-kkt} via
\begin{equation*}
\begin{aligned}
\Delta s &= -W (\Delta z - \mu S^{-1} e + z),
\end{aligned}
\end{equation*}
where $e$ denotes the all-ones vector, resulting in the linear system

\begin{equation}
\begin{aligned}
\label{ipm-3x3-newton-kkt}
& \begin{bmatrix}
P & C^T & G^T \\
C & -\Delta_C^{-1} & 0 \\
G & 0 & -(W + \Delta_G^{-1})
\end{bmatrix}
\begin{bmatrix}
\Delta x \\
\Delta y \\
\Delta z
\end{bmatrix}
\\ = & -
\begin{bmatrix}
\nabla_x \mathcal{L}(x, s, y, z; \mu) \\
c(x) \\
g(x) + s + W (\mu S^{-1} e - z)
\end{bmatrix}.
\end{aligned}
\end{equation}

We now show that the primal search direction determined by the regularized
interior-point method is guaranteed to be a descent direction of
$\mathcal{A}(\cdot, \cdot, y, z; \mu, \Delta_C, \Delta_G)$
at $(x, s)$ unless the primal variables have already converged,
provided some conditions on the inertia of the Newton-KKT matrix from~\cref{ipm-4x4-newton-kkt} are met.

\begin{definition}
For a symmetric matrix $M$, we denote its inertia by
$\operatorname{In}(M) = (n_+(M), n_-(M), n_0(M))$,
where $n_+(M)$, $n_-(M)$, and $n_0(M)$ denote the number of positive,
negative, and zero eigenvalues of $M$, counted with multiplicity.
\end{definition}

\begin{lemma}
\label{sylvester-inertia-lemma}
Let $H$ and $D$ be symmetric matrices with $D$ nonsingular. Then
\begin{equation*}
\begin{aligned}
\operatorname{In}\left(
\begin{bmatrix}
H & A^T \\
A & -D
\end{bmatrix}
\right)
=
\operatorname{In}(-D) + \operatorname{In}(H + A^T D^{-1} A).
\end{aligned}
\end{equation*}
\end{lemma}

\begin{proof}
Let $\Sigma = H + A^T D^{-1} A$ and
$T =
\begin{bmatrix}
I & 0 \\
-D^{-1} A & I
\end{bmatrix}$.
Then
\begin{equation*}
\begin{aligned}
\begin{bmatrix}
H & A^T \\
A & -D
\end{bmatrix}
= T^T
\begin{bmatrix}
\Sigma & 0 \\
0 & -D
\end{bmatrix}
T.
\end{aligned}
\end{equation*}
Since $T$ is nonsingular, Sylvester's law of inertia implies that the
left-hand side and the block-diagonal matrix on the right-hand side have the
same inertia. The result follows by additivity of inertia over block-diagonal
matrices.
\end{proof}

For compactness, we let $K_4$ denote the matrix in~\cref{ipm-4x4-newton-kkt}
and $K_3$ denote the matrix in~\cref{ipm-3x3-newton-kkt}.
We also let $n_x$ denote the dimension of $x$, and let $n_c$ and $n_g$
denote the number of equality and inequality constraints, respectively.

\begin{lemma}
\label{4x4-3x3-inertia-lemma}
\begin{equation*}
\begin{aligned}
\operatorname{In}(K_4) = \operatorname{In}(K_3) + (n_g,0,0).
\end{aligned}
\end{equation*}
\end{lemma}

\begin{proof}
Permuting the variables of $K_4$ from $(x,s,y,z)$ to $(x,y,z,s)$ yields
\begin{equation*}
\begin{aligned}
\begin{bmatrix}
P & C^T & G^T & 0 \\
C & -\Delta_C^{-1} & 0 & 0 \\
G & 0 & -\Delta_G^{-1} & I \\
0 & 0 & I & W^{-1}
\end{bmatrix}.
\end{aligned}
\end{equation*}
Applying~\cref{sylvester-inertia-lemma} with $D=-W^{-1}$ gives the Schur
complement
\begin{equation*}
\begin{aligned}
\begin{bmatrix}
P & C^T & G^T \\
C & -\Delta_C^{-1} & 0 \\
G & 0 & -(W+\Delta_G^{-1})
\end{bmatrix}
= K_3 .
\end{aligned}
\end{equation*}
Since $W^{-1}$ is positive-definite, the eliminated block contributes
$(n_g,0,0)$ to the inertia.
\end{proof}

\begin{lemma}
\label{3x3-2x2-inertia-lemma}
Let
\begin{equation*}
\begin{aligned}
K_{xy} =
\begin{bmatrix}
P + G^T (W + \Delta_G^{-1})^{-1} G & C^T \\
C & -\Delta_C^{-1}
\end{bmatrix},
\end{aligned}
\end{equation*}
Then
\begin{equation*}
\begin{aligned}
\operatorname{In}(K_3) = \operatorname{In}(K_{xy}) + (0,n_g,0),
\end{aligned}
\end{equation*}
or, equivalently,
\begin{equation*}
\begin{aligned}
\operatorname{In}(K_4) = \operatorname{In}(K_{xy}) + (n_g,n_g,0).
\end{aligned}
\end{equation*}
\end{lemma}

\begin{proof}
Applying~\cref{sylvester-inertia-lemma} to $K_3$ with
$D=W+\Delta_G^{-1}$ eliminates $\Delta z$ and gives the Schur complement
$K_{xy}$. Since $D$ is positive-definite, the eliminated block $-D$
contributes $(0,n_g,0)$ to the inertia. The equivalent statement follows
from~\cref{4x4-3x3-inertia-lemma}.
\end{proof}

\begin{lemma}
\label{4x4-primal-inertia-lemma}
Let
\begin{equation*}
\begin{aligned}
K_{xs} =
\begin{bmatrix}
P & 0 \\
0 & W^{-1}
\end{bmatrix}
+
\begin{bmatrix}
C^T & G^T \\
0 & I
\end{bmatrix}
\begin{bmatrix}
\Delta_C & 0 \\
0 & \Delta_G
\end{bmatrix}
\begin{bmatrix}
C & 0 \\
G & I
\end{bmatrix},
\end{aligned}
\end{equation*}
Then
\begin{equation*}
\begin{aligned}
\operatorname{In}(K_4) = \operatorname{In}(K_{xs}) + (0,n_c+n_g,0).
\end{aligned}
\end{equation*}
\end{lemma}

\begin{proof}
Let
\begin{equation*}
\begin{aligned}
H =
\begin{bmatrix}
P & 0 \\
0 & W^{-1}
\end{bmatrix},
\quad
A =
\begin{bmatrix}
C & 0 \\
G & I
\end{bmatrix},
\quad
D =
\begin{bmatrix}
\Delta_C^{-1} & 0 \\
0 & \Delta_G^{-1}
\end{bmatrix}.
\end{aligned}
\end{equation*}
Then $K_4 =
\begin{bmatrix}
H & A^T \\
A & -D
\end{bmatrix}$
and $K_{xs}=H+A^TD^{-1}A$. Applying~\cref{sylvester-inertia-lemma}
shows that
\begin{equation*}
\begin{aligned}
\operatorname{In}(K_4)=\operatorname{In}(-D)+\operatorname{In}(K_{xs}).
\end{aligned}
\end{equation*}
Since $D$ is positive-definite, $\operatorname{In}(-D)=(0,n_c+n_g,0)$.
\end{proof}

\begin{lemma}
\label{shifted-kkt-al-gradient-lemma}
The linear system
\begin{equation}
\begin{aligned}
& \begin{bmatrix}
P & 0 & C^T & G^T \\
0 & W^{-1} & 0 & I \\
C & 0 & -\Delta_C^{-1} & 0 \\
G & I & 0 & -\Delta_G^{-1}
\end{bmatrix}
\begin{bmatrix}
\Delta x \\
\Delta s \\
\Delta y - \Delta_C c(x) \\
\Delta z - \Delta_G (g(x) + s)
\end{bmatrix}
\\ = & -
\begin{bmatrix}
\nabla_x \mathcal{A}(x, s, y, z; \mu, \Delta_C, \Delta_G) \\
\nabla_s \mathcal{A}(x, s, y, z; \mu, \Delta_C, \Delta_G) \\
0 \\
0
\end{bmatrix}
\end{aligned}
\end{equation}
is equivalent to~\cref{ipm-4x4-newton-kkt}.
\end{lemma}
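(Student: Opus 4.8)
The plan is to verify the equivalence by direct substitution, exploiting the fact that the $4\times 4$ block coefficient matrix is literally the same in both systems, so only the unknowns and the right-hand sides differ. First I would record the blocks of $\nabla \mathcal{L}$: writing $\mathbf{1}$ for the all-ones vector, $\nabla_x \mathcal{L} = \nabla f(x) + C^T y + G^T z$, $\nabla_s \mathcal{L} = z - \mu S^{-1}\mathbf{1}$, $\nabla_y \mathcal{L} = c(x)$, and $\nabla_z \mathcal{L} = g(x) + s$. Then, differentiating the augmentation term $\tfrac{\eta}{2}(\lVert c(x)\rVert^2 + \lVert g(x)+s\rVert^2)$ by the chain rule (with $C = J(c)(x)$, $G = J(g)(x)$), I would record $\nabla_x \mathcal{A} = \nabla_x \mathcal{L} + \eta C^T c(x) + \eta G^T(g(x)+s)$ and $\nabla_s \mathcal{A} = \nabla_s \mathcal{L} + \eta(g(x)+s)$, while $\nabla_y \mathcal{A}$ and $\nabla_z \mathcal{A}$ are not needed.

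Next I would expand the four block-rows of the second system using the shifted unknowns $\Delta y - \eta c(x)$ and $\Delta z - \eta(g(x)+s)$. In the first block-row, $P\Delta x + C^T(\Delta y - \eta c(x)) + G^T(\Delta z - \eta(g(x)+s)) = -\nabla_x\mathcal{A}$; moving the two $\eta$-terms to the right cancels exactly the two extra terms in $\nabla_x\mathcal{A}$, leaving the first row of \cref{ipm-4x4-newton-kkt}. The second block-row behaves the same way, the single shift $-\eta(g(x)+s)$ cancelling the extra term in $\nabla_s\mathcal{A}$. In the third block-row, $C\Delta x - \tfrac{1}{\eta}(\Delta y - \eta c(x)) = 0$ rearranges to $C\Delta x - \tfrac{1}{\eta}\Delta y = -c(x) = -\nabla_y\mathcal{L}$, and similarly the fourth gives $G\Delta x + \Delta s - \tfrac{1}{\eta}\Delta z = -(g(x)+s) = -\nabla_z\mathcal{L}$. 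Collecting these four identities reproduces \cref{ipm-4x4-newton-kkt}; since every manipulation is an equivalence (the substitution of unknowns is invertible and the algebraic rearrangements are reversible), the two systems have the same solution set.

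This is a bookkeeping verification rather than an argument with a genuine obstacle; the only points requiring care are the sign and transpose conventions in the gradient of the augmentation term — in particular that the $-\tfrac1\eta I$ blocks in the last two rows of the matrix are precisely what turns the zero right-hand side there into the feasibility residuals $-\nabla_y\mathcal{L} = -c(x)$ and $-\nabla_z\mathcal{L} = -(g(x)+s)$ once the dual shift is absorbed.
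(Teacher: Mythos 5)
Your verification is correct: the gradient formulas for $\mathcal{L}$ and $\mathcal{A}$ are right, and the row-by-row cancellation of the $\eta$-terms against the shifted dual unknowns, together with the observation that the substitution is invertible, establishes the equivalence. The paper states this lemma without proof, and your direct substitution argument is precisely the intended (and essentially only) route, so there is nothing to contrast.
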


Below, we use $D( \cdot ; \cdot )$ to represent the directional derivative operator.

\begin{lemma}
\label{al-directional-derivative-lemma}
The primal component $(\Delta x,\Delta s)$ of the solution of
\cref{ipm-4x4-newton-kkt} satisfies
\begin{equation*}
\begin{aligned}
&D(\mathcal{A}(\cdot, \cdot, y, z; \mu, \Delta_C, \Delta_G);
(\Delta x, \Delta s))(x, s) \\
=& -
\begin{bmatrix}
\Delta x \\
\Delta s
\end{bmatrix}^T
K_{xs}
\begin{bmatrix}
\Delta x \\
\Delta s
\end{bmatrix}.
\end{aligned}
\end{equation*}
\end{lemma}

\begin{proof}
\begin{equation*}
\begin{aligned}
&D(\mathcal{A}(\cdot, \cdot, y, z; \mu, \Delta_C, \Delta_G);
(\Delta x, \Delta s))(x, s) \\
=&
\begin{bmatrix}
\Delta x^T & \Delta s^T & 0 & 0
\end{bmatrix}
\begin{bmatrix}
\nabla_x \mathcal{A}(x, s, y, z; \mu, \Delta_C, \Delta_G) \\
\nabla_s \mathcal{A}(x, s, y, z; \mu, \Delta_C, \Delta_G) \\
0 \\
0
\end{bmatrix} \\
=& -
\begin{bmatrix}
\Delta x^T & \Delta s^T & 0 & 0
\end{bmatrix}
K_4
\begin{bmatrix}
\Delta x \\
\Delta s \\
\Delta y - \Delta_C c(x) \\
\Delta z - \Delta_G (g(x) + s)
\end{bmatrix} \\
=& -
\begin{bmatrix}
\Delta x \\
\Delta s
\end{bmatrix}^T
\begin{bmatrix}
P & 0 \\
0 & W^{-1}
\end{bmatrix}
\begin{bmatrix}
\Delta x \\
\Delta s
\end{bmatrix}
\\
&-
\begin{bmatrix}
C \Delta x \\
G \Delta x + \Delta s
\end{bmatrix}^T
\begin{bmatrix}
\Delta y - \Delta_C c(x) \\
\Delta z - \Delta_G (g(x) + s)
\end{bmatrix}.
\end{aligned}
\end{equation*}
By the last two block rows of the shifted linear system in
\cref{shifted-kkt-al-gradient-lemma},
\begin{equation*}
\begin{aligned}
\Delta y - \Delta_C c(x) &= \Delta_C C \Delta x, \\
\Delta z - \Delta_G(g(x)+s) &= \Delta_G(G\Delta x+\Delta s).
\end{aligned}
\end{equation*}
Substituting these identities gives
\begin{equation*}
\begin{aligned}
&D(\mathcal{A}(\cdot, \cdot, y, z; \mu, \Delta_C, \Delta_G);
(\Delta x, \Delta s))(x, s) \\
=& -
\begin{bmatrix}
\Delta x \\
\Delta s
\end{bmatrix}^T
K_{xs}
\begin{bmatrix}
\Delta x \\
\Delta s
\end{bmatrix}.
\end{aligned}
\end{equation*}
\end{proof}

\begin{theorem}
\label{inertia-al-descent-theorem}
If
\begin{equation*}
\begin{aligned}
\operatorname{In}(K_4) = (n_x+n_g,n_c+n_g,0),
\end{aligned}
\end{equation*}
then the solution of~\cref{ipm-4x4-newton-kkt} has primal component
$(\Delta x,\Delta s)$ satisfying either $(\Delta x,\Delta s)=0$ or
\begin{equation*}
\begin{aligned}
D(\mathcal{A}(\cdot, \cdot, y, z; \mu, \Delta_C, \Delta_G);
(\Delta x, \Delta s))(x, s) < 0 .
\end{aligned}
\end{equation*}
\end{theorem}

\begin{proof}
By~\cref{4x4-primal-inertia-lemma}, the assumed inertia of $K_4$ implies
$\operatorname{In}(K_{xs})=(n_x+n_g,0,0)$, and therefore $K_{xs}$ is positive
definite.
By~\cref{al-directional-derivative-lemma}, the directional derivative is
strictly negative whenever
$(\Delta x,\Delta s) \neq 0$.
\end{proof}

We can ensure that $K_4$ has the right inertia by sufficiently regularizing $P$,
typically by adding $\rho I$ to it for sufficiently large $\rho$.
This requires using a Newton-KKT factorization method that can determine
whether $K_4$ has the right inertia (but not necessarily determine its exact inertia in general).

Next, we give an example showing that, without dual regularization, the Newton-KKT solution
is not guaranteed to descend the augmented barrier-Lagrangian merit function.

\begin{example}
Consider the two-stage unconstrained linear-quadratic problem with states $(x_0, x_1)$ and controls $u_0$,
costs $\frac{1}{2}(x_0^2 + u_0^2 + x_1^2)$, and dynamics $x_1 = x_0 + u_0$, with initial state $s_0 = 0$.
If the current primal iterate is $(x_0, u_0, x_1) = (0, 0, 1)$ and the current dual iterate is
$(y_0, y_1) = (0, 3)$, the resulting Newton-KKT linear system is
\begin{equation*}
\begin{aligned}
\begin{bmatrix}
 1 & 0 &  0 & -1 &  1 \\
 0 & 1 &  0 &  0 &  1 \\
 0 & 0 &  1 &  0 & -1 \\
-1 & 0 &  0 &  0 &  0 \\
 1 & 1 & -1 &  0 &  0 \\
\end{bmatrix}
\begin{bmatrix}
\Delta x_0 \\
\Delta u_0 \\
\Delta x_1 \\
\Delta y_0\\
\Delta y_1
\end{bmatrix}
= \begin{bmatrix}
-3 \\
-3 \\
 2 \\
 0 \\
 1
\end{bmatrix},
\end{aligned}
\end{equation*}
and $(\Delta x_0, \Delta u_0, \Delta x_1, \Delta y_0, \Delta y_1) = (0, 0, -1, 0, -3)$ is the solution.
Thus, the directional derivative of the augmented barrier-Lagrangian merit function is
\begin{equation*}
\begin{aligned}
& -\frac{1}{2}
\begin{bmatrix}
\Delta x_0 & \Delta u_0 & \Delta x_1
\end{bmatrix}
\begin{bmatrix}
1 & 0 & 0 \\
0 & 1 & 0 \\
0 & 0 & 1 \\
\end{bmatrix}
\begin{bmatrix}
\Delta x_0 \\
\Delta u_0 \\
\Delta x_1
\end{bmatrix}
\\
& +
\begin{bmatrix}
0 & -1
\end{bmatrix}
\begin{bmatrix}
\Delta y_0 \\
\Delta y_1
\end{bmatrix}
+ \eta
\begin{bmatrix}
0 & -1
\end{bmatrix}
\begin{bmatrix}
-1 & 0 & 0 \\
 1 & 1 & -1 \\
\end{bmatrix}
\begin{bmatrix}
\Delta x_0 \\
\Delta u_0 \\
\Delta x_1
\end{bmatrix}
\\
& = 2 - \eta,
\end{aligned}
\end{equation*}
so $\eta > 2$ is required for the computed search direction to descend the merit function.
\end{example}

\subsection{Applying the regularized IPM to optimal control}\label{application}

As shown in~\cref{interior-point}, the line-search directions used in the
regularized interior-point method are computed by solving a linear system
of the form

\begin{equation*}
\begin{aligned}
\begin{bmatrix}
P & C^T & G^T \\
C & -\Delta_C^{-1} & 0 \\
G & 0 & -W - \Delta_G^{-1}
\end{bmatrix}
\begin{bmatrix}
\Delta x \\
\Delta y \\
\Delta z
\end{bmatrix} = -
\begin{bmatrix}
r_x \\
r_y \\
r_z
\end{bmatrix}.
\end{aligned}
\end{equation*}

Given that $\Delta z = (W + \Delta_G^{-1})^{-1}(G \Delta x + r_z)$,
we can also eliminate $\Delta z$, resulting in

\begin{equation*}
\begin{aligned}
& \begin{bmatrix}
P + G^T (W + \Delta_G^{-1})^{-1} G & C^T \\
C & -\Delta_C^{-1}
\end{bmatrix}
\begin{bmatrix}
\Delta x \\
\Delta y
\end{bmatrix}
\\ = & -
\begin{bmatrix}
r_x + G^T (W + \Delta_G^{-1})^{-1} r_z \\
r_y
\end{bmatrix}.
\end{aligned}
\end{equation*}

Any component of $\Delta y$ corresponding to an equality constraint other than
the dynamics can be eliminated in the same fashion. Importantly,
the only constraints that have cross-stage dependencies
are the dynamics, so these eliminations preserve the stagewise nature of the problem,
provided that $W, \Delta_C, \Delta_G$ are chosen so as not to introduce
cross-stage dependencies; $W, \Delta_C, \Delta_G$ are typically
diagonal, in which case this requirement is met.

This leaves us with a linear system that matches the first-order optimality conditions
of a dual-regularized LQR problem, as in~\cref{primal-dual-linear-system}.

Below, let $N, n, m$ denote the number of stages, states, and controls of this
dual-regularized LQR problem. Note that $n_x = N (n + m) + n$.

\begin{theorem}
$K_4$ has inertia $(n_x + n_g, n_c + n_g, 0)$ if and only if the left-hand side of
the first-order optimality conditions of the resulting dual-regularized LQR has inertia
$(N (n + m) + n, (N + 1) n, 0)$.
\end{theorem}
\begin{proof}
This follows by applying~\cref{sylvester-inertia-lemma} together with~\cref{3x3-2x2-inertia-lemma},
with $A$ chosen to cover the dynamics Jacobians (leaving the Jacobians of other equality constraints in $H$).
\end{proof}

The usual positive-semidefiniteness assumptions on the stagewise cost Hessians are
sufficient, but not necessary, for the dual-regularized LQR first-order
optimality matrix to have the correct inertia.

\begin{example}
Consider a scalar problem with
$N=1$,
\begin{equation*}
\begin{aligned}
Q_0=1,\quad M_0=0,\quad R_0=-1,\quad Q_1=1,
\end{aligned}
\end{equation*}
dynamics $x_1=2u_0$, and dual regularization $\Delta_0=\Delta_1=1$.
Then
\begin{equation*}
\begin{aligned}
P =
\begin{bmatrix}
1 & 0 & 0 \\
0 & -1 & 0 \\
0 & 0 & 1
\end{bmatrix},
\quad
C =
\begin{bmatrix}
-1 & 0 & 0 \\
0 & 2 & -1
\end{bmatrix},
\quad
\Delta = I .
\end{aligned}
\end{equation*}
The stage Hessian
\begin{equation*}
\begin{aligned}
\begin{bmatrix}
Q_0 & M_0 \\
M_0^T & R_0
\end{bmatrix}
=
\begin{bmatrix}
1 & 0 \\
0 & -1
\end{bmatrix}
\end{aligned}
\end{equation*}
is indefinite, and in particular $R_0$ is not positive-definite. Nevertheless,
the first-order optimality matrix
\begin{equation*}
\begin{aligned}
K_{\mathrm{LQR}} =
\begin{bmatrix}
P & C^T \\
C & -\Delta
\end{bmatrix}
\end{aligned}
\end{equation*}
has inertia $(3,2,0)$. Indeed, by~\cref{sylvester-inertia-lemma},
\begin{equation*}
\begin{aligned}
\operatorname{In}(K_{\mathrm{LQR}})
= \operatorname{In}(-I) + \operatorname{In}(P+C^TC),
\end{aligned}
\end{equation*}
and
\begin{equation*}
\begin{aligned}
P+C^TC =
\begin{bmatrix}
2 & 0 & 0 \\
0 & 3 & -2 \\
0 & -2 & 2
\end{bmatrix}
\succ 0,
\end{aligned}
\end{equation*}
with leading principal minors $2,6,4$. Thus
$\operatorname{In}(K_{\mathrm{LQR}})=(3,2,0)$, even though the usual
positive-semidefiniteness and positive-definiteness assumptions on the local
stage Hessian blocks are violated.
\end{example}

\begin{theorem}
\label{riccati-rational-identity-theorem}
Whenever the first-order optimality matrix in~\cref{primal-dual-linear-system} is
nonsingular and the Riccati recursions derived in~\cref{seq-algorithm-via-calculus}
and~\cref{parallel-algorithm} are well-defined, the
Riccati recursions return the unique solution of~\cref{primal-dual-linear-system},
even if the usual positive-semidefiniteness and positive-definiteness
assumptions on the local stage Hessian blocks are not satisfied.
\end{theorem}

\begin{proof}
For fixed dimensions, each component returned by the Riccati recursions is a
rational function of the problem data. Indeed, the quantities initialized
directly from the problem data are rational functions, and the field of
rational functions is closed under addition, multiplication, and division by
nonzero rational functions. It follows inductively that the entries of each
matrix formed during the recursion are rational functions of the original
problem data, and that each matrix inverse used by the recursion, whenever it
is defined, also has entries that are rational functions of the original
problem data. Similarly, whenever the first-order optimality matrix
$K_{\mathrm{LQR}}$ is nonsingular, each component of the unique solution
$-K_{\mathrm{LQR}}^{-1} [s^T \ c^T]^T$ is a rational function of the same data.

On the nonempty open set where the usual stronger assumptions hold, for
example when the local stage Hessian blocks and the matrices $\Delta_i$ are
positive-definite, the derivation of the Riccati recursions proves that these
two rational functions agree. The difference between any pair of corresponding
components is therefore a rational function that vanishes on a nonempty open
set. After clearing denominators, its numerator is a polynomial that vanishes
on a nonempty open set, and hence is identically zero. Thus the two rational
functions are identical.

Consequently, the Riccati expressions and the direct solution of
~\cref{primal-dual-linear-system} agree at every problem instance for which
both expressions are defined.
\end{proof}

\subsection{Associative Scans Overview}\label{associative-scans}

Associative scans are a common parallelization mechanism used in functional programming,
first introduced in~\cite{ascan}. They were used in~\cite{lqr-gpu} to derive a simple
method for solving (primal) LQR problems in $O(\log^2(m) + \log(N) \log^2(n))$ parallel time,
where $N, n, m$ are the number of stages, states, and controls, respectively.
Note, however, that~\cite{lqr-gpu} states that the complexity of their algorithm
is $O(m + \log(N) n)$, apparently stemming from some confusion around the parallel
time complexity of matrix inversion (which is $O(\log(n)^2)$ for $n \times n$ matrices,
as shown in~\cite{parallel-inverse}).

Given a set $\mathcal{X}$, a function $f: \mathcal{X} \times \mathcal{X} \rightarrow \mathcal{X}$
is said to be associative if $\forall a, b, c \in \mathcal{X}, f(f(a, b), c) = f(a, f(b, c))$.
The forward associative scan operation $S_f(x_1, \ldots, x_n; f)$ can be inductively defined by
\begin{equation*}
\begin{aligned}
S_f(x_1; f) &= x_1, \\
S_f(x_1, \ldots, x_{i + 1}; f) &= (y_1, \ldots, y_i, f(y_i, x_{i + 1})),
\end{aligned}
\end{equation*}
where $y_1, \ldots, y_i = S_f(x_1, \ldots, x_i; f)$.
Similarly, the reverse associative scan operation $S_r(x_1, \ldots, x_n; f)$ can be
inductively defined by
\begin{equation*}
\begin{aligned}
S_r(x_1; f) &= x_1, \\
S_r(x_1, \ldots, x_{i + 1}; f) &= (f(x_1, y_2), y_2, \ldots, y_{i + 1}),
\end{aligned}
\end{equation*}
where $y_2, \ldots, y_{i + 1} = S_r(x_2, \ldots, x_{i + 1}; f)$.
\cite{ascan} gives a method for performing associative scans of $N$ elements in parallel time $O(\log(N))$.

We use associative scans to derive the parallel dual-regularized Riccati recursion,
effectively extending~\cite{lqr-gpu}.

\section{Sequential Algorithm}\label{seq-algorithm-via-calculus}

We start with some simple lemmas
used to eliminate variables in~\cref{min-max-with-ys}.

\begin{lemma}
\label{eliminate-y}
If $M$ is symmetric and positive-definite and
$f(y) = k^T y - \frac{1}{2} y^T M y$, then
$\max \limits_y f(y) = f(M^{-1} k) = \frac{1}{2} \lVert k \rVert_{M^{-1}}^{2}$.
\end{lemma}

\begin{proof}
$\nabla f(y) = k - M y = 0 \Rightarrow y = M^{-1} k$.
Moreover, $f(M^{-1} k)
= k^T M^{-1} k - \frac{1}{2} k^T M^{-1} k
= \frac{1}{2} \lVert k \rVert_{M^{-1}}^{2}$.
\end{proof}

\begin{lemma}
\label{inverse-helper}
If $P$ is symmetric and positive semi-definite, and $M$ is symmetric and positive-definite,
\begin{equation*}
\begin{aligned}
I - (I + M P)^{-1} &= M P (I + M P)^{-1} = (I + M P)^{-1} M P , \\
(P + M^{-1})^{-1} &= (I + M P)^{-1} M = M (I + P M)^{-1} .
\end{aligned}
\end{equation*}
\end{lemma}

\begin{proof}
\begin{equation*}
\begin{aligned}
M P (I + M P)^{-1} & = (I + M P)(I + M P)^{-1} - (I + M P)^{-1} \\
& = I - (I + M P)^{-1} \\
(I + M P)^{-1} M P & = (I + M P)^{-1}(I + M P) - (I + M P)^{-1} \\
& = I - (I + M P)^{-1} \\
(P + M^{-1})^{-1} &= (M^{-1} (I + M P))^{-1} = (I + M P)^{-1} M \\
(P + M^{-1})^{-1} &= ((I + P M) M^{-1})^{-1} = M (I + P M)^{-1}.
\end{aligned}
\end{equation*}
\end{proof}

\begin{corollary}
\label{inverse-helper-corollary}
If $P$ and $M$ are symmetric and positive semi-definite,
\begin{equation*}
\begin{aligned}
(I + M P)^{-1} M = M (I + P M)^{-1} .
\end{aligned}
\end{equation*}
\end{corollary}

\begin{proof}
\begin{equation*}
\begin{aligned}
& (I + M P)^{-1} M = \lim \limits_{\epsilon \downarrow 0} (I + (M + \epsilon I) P)^{-1} (M + \epsilon I) \\
= & \lim \limits_{\epsilon \downarrow 0} (M + \epsilon I) (I + P (M + \epsilon I))^{-1}
= M (I + P M)^{-1}.
\end{aligned}
\end{equation*}
\end{proof}

\begin{lemma}
\label{eliminate-x}
If $P$ is symmetric and positive semi-definite,
$M$ is symmetric and positive-definite, and
$f(x) = \frac{1}{2} x^T P x + p^T x + \frac{1}{2} \lVert c - x \rVert_{M^{-1}}^2$, then
\begin{equation*}
\begin{aligned}
\min \limits_x f(x) = & f \left( (I + M P)^{-1} (c - M p) \right) \\
= & \frac{1}{2} c^T P (I + M P)^{-1} c - \frac{1}{2}p^T (I + M P)^{-1} M p \\
+ & p^T c - p^T M P(I + M P)^{-1} c.
\end{aligned}
\end{equation*}
\end{lemma}

\begin{proof}
Noting that
\begin{equation*}
\begin{aligned}
f(x) = \frac{1}{2} x^T (P + M^{-1}) x + (p - M^{-1} c)^T x + \frac{1}{2} c^T M^{-1} c,
\end{aligned}
\end{equation*}
it follows that
\begin{equation*}
\begin{aligned}
& \nabla f(x) = (P + M^{-1}) x + p - M^{-1} c = 0 \\
\Rightarrow & (P + M^{-1}) x = M^{-1} c - p \\
\Rightarrow & x = (P + M^{-1})^{-1} (M^{-1} c - p).
\end{aligned}
\end{equation*}

Moreover, employing~\cref{inverse-helper},
\begin{equation*}
\begin{aligned}
& f \left( (P + M^{-1})^{-1} (M^{-1} c - p) \right) \\
= & \frac{1}{2} (M^{-1} c - p)^T (P + M^{-1})^{-1} (M^{-1} c - p) \\
+ & (p - M^{-1} c)^T (P + M^{-1})^{-1} (M^{-1} c - p) + \frac{1}{2} c^T M^{-1} c \\
= & -\frac{1}{2} (M^{-1} c - p)^T (P + M^{-1})^{-1} (M^{-1} c - p) + \frac{1}{2} c^T M^{-1} c \\
= & \frac{1}{2} c^T (M^{-1} - M^{-1} (P + M^{-1})^{-1} M^{-1}) c \\
- & \frac{1}{2} p^T (P + M^{-1})^{-1} p + p^T ((P + M^{-1})^{-1} M^{-1}) c \\
= & \frac{1}{2} c^T M^{-1} (I - (I + M P)^{-1}) c \\
- & \frac{1}{2} p^T (I + M P)^{-1} M p + p^T (I + M P)^{-1} c \\
  = & \frac{1}{2} c^T M^{-1} M P (I + M P)^{-1} c - \frac{1}{2} p^T (I + M P)^{-1} M p \\
+ & p^T c - p^T (I - (I + M P)^{-1}) c \\
= & \frac{1}{2} c^T P (I + M P)^{-1} c - \frac{1}{2} p^T (I + M P)^{-1} M p \\
+ & p^T c - p^T (M P (I + M P)^{-1}) c .
\end{aligned}
\end{equation*}

Finally, note that
\begin{equation*}
\begin{aligned}
(P + M^{-1})^{-1} (M^{-1} c - p) &= (I + M P)^{-1} M (M^{-1} c - p) \\
&= (I + M P)^{-1} (c - M p) .
\end{aligned}
\end{equation*}
\end{proof}

\begin{definition}
For $k \in \lbrace 0, \ldots, N \rbrace$, we define the cost-to-go functions
\begin{equation}
\begin{aligned}
\label{cost-to-go-value-functions}
V_{k}(x_k) =
& \max \limits_{y_{k+1}, \ldots, y_N} \min \limits_{u_k, x_{k+1}, \ldots, u_{N-1}, x_N} \\
& \sum \limits_{i=k}^{N-1}
\left( \frac{1}{2} x_i^T Q_i x_i + x_i^T M_i u_i + \frac{1}{2} u_i^T R_i u_i \right. \\
& \left. \vphantom{\frac{1}{1}}+ q_i^T x_i + r_i^T u_i \right)
+ \frac{1}{2} x_N^T Q_N x_N + q_N^T x_N \\
& + \sum \limits_{i=k}^{N-1} \left( y_{i+1}^T \left( A_i x_i
+ B_i u_i + c_{i + 1} - x_{i + 1} \right) \right) \\
  & - \sum \limits_{i=k}^{N-1} \frac{1}{2} y_{i+1}^T \Delta_{i+1} y_{i+1} . \\
\end{aligned}
\end{equation}
\end{definition}

This brings us to the key theorem.

\begin{theorem}
\label{main-seq-theorem}
For all $k \in \lbrace 0, \ldots, N \rbrace$, there exist $P_k, p_k$ such
that
\begin{equation*}
\begin{aligned}
 V_k(x_k) = \frac{1}{2} x_k^T P_k x_k + p_k^T x_k + V_k(0),
\end{aligned}
\end{equation*}
where $P_k$ is symmetric and positive semi-definite,
as well as $K_k, k_k$ such that~\cref{min-max-with-ys} is optimal at
\begin{equation*}
\begin{aligned}
u_k =& K_k x_k + k_k , \\
x_0 =& (I + \Delta_0 P_0)^{-1} (c_0 - \Delta_0 p_0) , \\
x_{k+1} =& (I + \Delta_{k+1} P_{k+1})^{-1} \\
& (A_k x_k + B_k u_k + c_{k+1} - \Delta_{k+1} p_{k+1}) , \\
y_k =& P_k x_k + p_k .
\end{aligned}
\end{equation*}
\end{theorem}

\begin{proof}
We proceed by induction, in decreasing order of $k$.
The base case, consisting of $k = N$, holds trivially by setting
$P_N = Q_N$ and $p_N = q_N$.

Assuming, by the induction hypothesis, that the statement holds true for $k+1$,
we will show it remains true for $k$.

Observing that
\begin{equation}
\begin{aligned}
\label{value-function-recursive-definition}
  V_k(x_k) = & \max \limits_{y_{k+1}} \min \limits_{u_k, x_{k+1}} V_{k+1} (x_{k+1})
  + \frac{1}{2} x_k^T Q_k x_k \\
	& + x_k^T M_k u_k + \frac{1}{2} u_k^T R_k u_k
  - \frac{1}{2} y_{k+1}^T \Delta_{k+1} y_{k+1} \\
  & + y_{k+1}^T (A_k x_k + B_k u_k + c_{k+1} - x_{k+1}) ,
\end{aligned}
\end{equation}
we start by eliminating $y_{k+1}$ from~\cref{value-function-recursive-definition},
using~\cref{eliminate-y}.

Note that
\begin{equation*}
\begin{aligned}
& \max \limits_{y_{k+1}}
y_{k+1}^T \left( A_k x_k + B_k u_k + c_{k+1} - x_{k+1} \right) \\
& \quad - \frac{1}{2} y_{k+1}^T \Delta_{k+1} y_{k+1} \\
& = \frac{1}{2}
\lVert A_k x_k + B_k u_k + c_{k+1} - x_{k+1}
\rVert_{\Delta_{k+1}^{-1}}^2 ,
\end{aligned}
\end{equation*}
achieved at
\begin{equation}
\begin{aligned}
\label{recover-y}
y_{k+1} = \Delta_{k+1}^{-1}
(A_k x_k + B_k u_k + c_{k+1} - x_{k+1}) .
\end{aligned}
\end{equation}

Thus,
\begin{equation*}
\begin{aligned}
  V_{k} (x_{k}) = & \min \limits_{u_k, x_{k+1}} V_{k+1} (x_{k+1})
+ \frac{1}{2} x_k^T Q_k x_k + x_k^T M_k u_k \\
& + \frac{1}{2} \lVert A_k x_k + B_k u_k + c_{k+1} - x_{k+1} \rVert_{\Delta_{k+1}^{-1}}^2 \\
& + \frac{1}{2} u_k^T R_k u_k + \text{const} .
\end{aligned}
\end{equation*}

Applying the induction hypothesis,
\begin{equation}
\begin{aligned}
\label{before-eliminating-x}
& V_{k} (x_{k}) = \min \limits_{u_k, x_{k+1}}
\frac{1}{2} x_k^T Q_k x_k + x_k^T M_k u_k + \frac{1}{2} u_k^T R_k u_k \\
& + \frac{1}{2} x_{k+1}^T P_{k+1} x_{k+1} + p_{k+1}^T x_{k+1} \\
& + \frac{1}{2} \lVert A_k x_k + B_k u_k + c_{k+1} - x_{k+1} \rVert_{\Delta_{k+1}^{-1}}^2
+ \text{const} .
\end{aligned}
\end{equation}

Next, we apply~\cref{eliminate-x} to eliminate $x_{k+1}$ from~\cref{before-eliminating-x}.

The terms involving $x_{k+1}$ are
\begin{equation}
\begin{aligned}
\label{next-x-terms}
\min \limits_{x_{k+1}} & \frac{1}{2} x_{k+1}^T P_{k+1} x_{k+1} + p_{k+1}^T x_{k+1} \\
& + \frac{1}{2} \lVert A_k x_k + B_k u_k + c_{k+1} - x_{k+1} \rVert_{\Delta_{k+1}^{-1}}^2.
\end{aligned}
\end{equation}

Letting
$W_{k+1} = P_{k+1} (I + \Delta_{k+1} P_{k+1})^{-1}$,
it follows from~\cref{inverse-helper-corollary} that $W_{k+1} = W_{k+1}^T$.

Applying~\cref{eliminate-x},
and discarding additive constant terms,~\cref{next-x-terms} becomes
\begin{equation*}
\begin{aligned}
& \frac{1}{2} (A_k x_k + B_k u_k + c_{k+1})^T W_{k+1}
(A_k x_k + B_k u_k + c_{k+1}) \\
& + p_{k+1}^T (A_k x_k + B_k u_k + c_{k+1}) \\
& - p_{k+1}^T \Delta_{k+1} W_{k+1} (A_k x_k + B_k u_k + c_{k+1}) ,
\end{aligned}
\end{equation*}
achieved at
\begin{equation}
\begin{aligned}
\label{recover-x}
x_{k+1} = & (I + \Delta_{k+1} P_{k+1})^{-1} \\
& (A_k x_k + B_k u_k + c_{k+1} - \Delta_{k+1} p_{k+1}) .
\end{aligned}
\end{equation}

Next, we eliminate $u_k$ from the resulting version of~\cref{before-eliminating-x}
after $x_{k+1}$ has been eliminated. The terms involving $u_k$ are
\begin{equation}
\begin{aligned}
\label{only-u-terms}
& \min \limits_{u_k} (r_k + M_k^T x_k)^T u_k + \frac{1}{2} u_k^T R_k u_k \\
& + \frac{1}{2} (A_k x_k + B_k u_k + c_{k+1})^T W_{k+1} (A_k x_k + B_k u_k + c_{k+1}) \\
& + p_{k+1}^T B_k u_k
- p_{k+1}^T \Delta_{k+1} W_{k+1} B_k u_k \\
& = \min \limits_{u_k} \frac{1}{2} u_k^T
(R_k + B_k^T W_{k+1} B_k) u_k \\
& + \left( r_k + M_k^T x_k + B_k^T \left( W_{k+1} (A_k x_k + c_{k+1}) \right. \right. \\
& \left. \left. + p_{k+1} - W_{k+1} \Delta_{k+1} p_{k+1} \right) \right)^T u_k \\
& + \frac{1}{2} (A_k x_k + c_{k+1})^T W_{k+1}
(A_k x_k + c_{k+1}) .
\end{aligned}
\end{equation}

Note that this leaves out the terms
\begin{equation}
\begin{aligned}
\label{left-out-x-terms}
p_{k+1}^T (A_k x_k + c_{k+1}) - p_{k+1}^T \Delta_{k+1} W_{k+1} (A_k x_k + c_{k+1}) ,
\end{aligned}
\end{equation}
which do not depend on $u_k$. We add them back later.

Letting
\begin{equation*}
\begin{aligned}
G_k &= R_k + B_k^T W_{k+1} B_k \\
g_{k+1} &= p_{k+1} + W_{k+1} (c_{k+1} - \Delta_{k+1} p_{k+1}) \\
H_k &= B_k^T W_{k+1} A_k + M_k^T \\
h_k &= r_k + B_k^T g_{k+1} \\
K_k &= -G_k^{-1} H_k \\
k_k &= -G_k^{-1} h_k ,
\end{aligned}
\end{equation*}
\cref{only-u-terms} becomes
\begin{equation*}
\begin{aligned}
\min \limits_{u_k} & \frac{1}{2} u_k^T G_k u_k + (H_k x_k + h_k)^T u_k \\
& + \frac{1}{2} (A_k x_k + c_{k+1})^T W_{k+1} (A_k x_k + c_{k+1}) .
\end{aligned}
\end{equation*}

Taking the gradient with respect to $u_{k}$ and equating it to $0$, we get
\begin{equation}
\begin{aligned}
\label{control-law}
u_{k} = - G_k^{-1} (H_k x_k + h_k) = K_k x_k + k_k.
\end{aligned}
\end{equation}

Plugging this back in, and discarding additive constants, we get
\begin{equation}
\begin{aligned}
\label{after-eliminating-u}
& -\frac{1}{2} (H_k x_k + h_k)^T G_k^{-1} (H_k x_k + h_k) \\
& + \frac{1}{2} (A_k x_k + c_{k+1})^T W_{k+1} (A_k x_k + c_{k+1})
\\ = & \frac{1}{2} x_k^T (A_k^T W_{k+1} A_k - H_k^T G_k^{-1} H_k) x_k \\
& + (A_k^T W_{k+1} c_{k+1} - H_k^T G_k^{-1} h_k)^T x_k \\
= & \frac{1}{2} x_k^T (A_k^T W_{k+1} A_k + H_k^T K_k) x_k \\
& + (A_k^T W_{k+1} c_{k+1} + H_k^T k_k)^T x_k .
\end{aligned}
\end{equation}

Collecting all remaining terms from~\cref{left-out-x-terms}
and~\cref{after-eliminating-u}, discarding additive constants, and letting
\begin{equation}
\begin{aligned}
\label{value-fn-formulas}
P_k &= A_k^T W_{k+1} A_k + Q_k + H_k^T K_k \\
p_k &= q_k + A_k^T g_{k+1} + H_k^T k_k , \\
\end{aligned}
\end{equation}
~\cref{before-eliminating-x} becomes
\begin{equation*}
\begin{aligned}
V_k(x_k) & = \frac{1}{2} x_k^T P_k x_k + p_k^T x_k + \text{const} \\
& = \frac{1}{2} x_k^T P_k x_k + p_k^T x_k + V_k(0) .
\end{aligned}
\end{equation*}

Next, we must show that $P_k$ remains positive semi-definite.

Noting that $P_k$ is the Schur complement of
\begin{equation}
\begin{aligned}
\label{psd-block-matrices}
& \begin{bmatrix}
Q_k + A_k^T W_{k+1} A_k  &  H_k^T \\
H_k                      &  G_k
\end{bmatrix} \\
= & \begin{bmatrix}
Q_k    &  M_k^T \\
M_k^T  &  R_k
\end{bmatrix} +
\begin{bmatrix}
A_k^T W_{k+1} A_k  &  A_k^T W_{k+1} B_k  \\
B_k^T W_{k+1} A_k  &  B_k^T W_{k+1} B_k
\end{bmatrix} \\
= & \begin{bmatrix}
Q_k    &  M_k^T \\
M_k^T  &  R_k
\end{bmatrix} +
\begin{bmatrix}
A_k^T \\
B_k^T
\end{bmatrix}
W_{k+1}
\begin{bmatrix}
A_k & B_k
\end{bmatrix}
,
\end{aligned}
\end{equation}
it follows that $P_k$ is positive semi-definite, as $G_k$ is positive-definite
and the block matrix~\cref{psd-block-matrices} is positive semi-definite.

Noting that~\cref{min-max-with-ys} can be written as
\begin{equation*}
\begin{aligned}
\max \limits_{y_0} \min \limits_{x_0}
V_0(x_0) + y_0^T (x_0 - c_0) - \frac{1}{2} y_0^T \Delta_0 y_0 ,
\end{aligned}
\end{equation*}
we can apply~\cref{eliminate-y} to rewrite this as
\begin{equation*}
\begin{aligned}
\min \limits_{x_0}
& V_0(x_0) + \frac{1}{2} \lVert c_0 - x_0 \rVert_{\Delta_0^{-1}}^2 \\
= \min \limits_{x_0}
& \frac{1}{2} x_0^T (P_0 + \Delta_0^{-1}) x_0
+ (p_0 - \Delta_0^{-1} c_0)^T x_0 \\
& + \frac{1}{2} \lVert c_0 \rVert_{\Delta_0}^2 + V_0(0),
\end{aligned}
\end{equation*}
achieved at
\begin{equation}
\begin{aligned}
\label{recover-y0}
y_0 = \Delta_0^{-1} (c_0 - x_0) .
\end{aligned}
\end{equation}

Solving for $x_0$, we get
\begin{equation}
\begin{aligned}
\label{recover-x0}
x_0 & = (P_0 + \Delta_0^{-1})^{-1} (\Delta_0^{-1} c_0 - p_0) \\
& = (I + \Delta_0 P_0)^{-1} \Delta_0 (\Delta_0^{-1} c_0 - p_0) \\
& = (I + \Delta_0 P_0)^{-1} (c_0 - \Delta_0 p_0).
\end{aligned}
\end{equation}

The remaining $x_k, u_k$ can be recovered in a forward pass,
i.e. in increasing order of $k$ via~\cref{recover-x} and~\cref{control-law}.

Finally, we can rewrite~\cref{recover-y0} and~\cref{recover-y}
to avoid using $\Delta_0^{-1}, \ldots, \Delta_N^{-1}$,
in the interest of improving numerical stability when
$\Delta_0, \ldots, \Delta_N \rightarrow 0$ as well as
extending this method to the case of $\Delta_0, \ldots, \Delta_N = 0$
(recovering the standard backward and forward passes of the standard LQR algorithm).

Since, due to~\cref{recover-x0},
\begin{equation*}
\begin{aligned}
(P_0 + \Delta_0^{-1}) x_0 &= \Delta_0^{-1} c_0 - p_0 \\
\Leftrightarrow \Delta_0^{-1} (c_0 - x_0) &= P_0 x_0 + p_0,
\end{aligned}
\end{equation*}
we can rewrite~\cref{recover-y0} as
\begin{equation*}
\begin{aligned}
y_0 = P_0 x_0 + p_0.
\end{aligned}
\end{equation*}

Moreover, due to~\cref{recover-x},
\begin{equation*}
\begin{aligned}
(I + \Delta_{k+1} P_{k+1}) x_{k+1} = A_k x_k + B_k u_k + c_{k+1} - \Delta_{k+1} p_{k+1}
\\ \Leftrightarrow \Delta_{k+1}^{-1} (A_k x_k + B_k u_k + c_{k+1} - x_{k+1})
= P_{k+1} x_{k+1} + p_{k+1} .
\end{aligned}
\end{equation*}
Thus, we can rewrite~\cref{recover-y} as
\begin{equation*}
\begin{aligned}
y_{k+1} = P_{k+1} x_{k+1} + p_{k+1} .
\end{aligned}
\end{equation*}
\end{proof}

The following result is used in~\cite{rlqr-jax} to accelerate
multiple problem solves sharing the same left-hand side.
We will also need it in~\cref{parallel-algorithm}.

\begin{corollary}
\label{p-recurrence}
The variables $p_k$ satisfy the affine recurrence
\begin{equation*}
\begin{aligned}
p_k = & q_k + K_k^T r_k \\
+ & (A_k + B_k K_k)^T (p_{k+1} + W_{k+1} (c_{k+1} - \Delta_{k+1} p_{k+1})) .
\end{aligned}
\end{equation*}
\end{corollary}

\begin{proof}
Noting that
\begin{equation*}
\begin{aligned}
K_k^T h_k = -(G_k^{-1} H_k)^T h_k = H_k^T (-G_k^{-1} h_k) = H_k^T k_k,
\end{aligned}
\end{equation*}
it follows that
\begin{equation*}
\begin{aligned}
p_k = & q_k + A_k^T g_{k+1} + H_k^T k_k = q_k + A_k^T g_{k+1} + K_k^T h_k \\
= & q_k + A_k^T g_{k+1} + K_k^T (r_k + B_k^T g_{k+1}) \\
= & q_k + K_k^T r_k + (A_k + B_k K_k)^T g_{k+1} = q_k + K_k^T r_k \\
+ & (A_k + B_k K_k)^T (p_{k+1} + W_{k+1} (c_{k+1} - \Delta_{k+1} p_{k+1})) .
\end{aligned}
\end{equation*}
\end{proof}

\section{Parallel Algorithm}
\label{parallel-algorithm}

We first eliminate the control variables $u_i$ from~\cref{min-max-with-ys}.
Computing the gradients with respect to $u_i$, equating them to $0$,
and solving for $u_i$ yields
\begin{equation*}
\begin{aligned}
u_i = -R_i^{-1} \left( M_i^T x_i + r_i + B_i^T y_{i+1} \right).
\end{aligned}
\end{equation*}

Noting that
\begin{equation*}
\begin{aligned}
\frac{1}{2} u_i^T R_i u_i = & \frac{1}{2} x_i^T M_i R_i^{-1} M_i^T x_i +
r_i^T R_i^{-1} M_i^T x_i \\
& + \frac{1}{2} y_{i+1}^T B_i R_i^{-1} B_i^T y_{i+1}
+ y_{i+1}^T B_i R_i^{-1} M_i^T x_i \\
& + r_i^T R_i^{-1} B_i^T y_{i+1}
+ \frac{1}{2} r_i^T R_i^{-1} r_i, \\
x_i^T M_i u_i = & -x_i^T M_i R_i^{-1} M_i^T x_i - r_i^T R_i^{-1} M_i^T x_i \\
& - y_{i+1}^T B_i R_i^{-1} M_i^T x_i, \\
r_i^T u_i = & -r_i^T R_i^{-1} M_i^T x_i - r_i^T R_i^{-1} r_i \\
& - y_{i+1}^T B_i R_i^{-1} r_i, \\
y_{i+1}^T B_i u_i = & -y_{i+1}^T B_i R_i^{-1} B_i^T y_{i+1} \\
& - y_{i+1}^T B_i R_i^{-1} M_i^T x_i
- r_i^T R_i^{-1} B_i^T y_{i+1},
\end{aligned}
\end{equation*}
and discarding constant terms, the optimization problem~\cref{min-max-with-ys} becomes
\begin{equation}
\begin{aligned}
\label{min-max-without-us}
& \max \limits_{y_0, \ldots, y_N} \min \limits_{x_0, \ldots, x_N}
\sum \limits_{i=0}^{N-1}
\left( \frac{1}{2} x_i^T (Q_i - M_i R_i^{-1} M_i^T) x_i \right. \\
& \left. \vphantom{\frac{1}{1}}+ (q_i - M_i R_i^{-1} r_i)^T x_i \right)
+ \frac{1}{2} x_N^T Q_N x_N + q_N^T x_N \\
& + \sum \limits_{i=0}^{N-1} \left( y_{i+1}^T \left( \left( A_i - B_i R_i^{-1} M_i^T \right) x_i \right. \right. \\
& \left. \left. + \left( c_{i+1} - B_i R_i^{-1} r_i \right) - x_{i + 1} \right) \right) \\
& - \frac{1}{2} \sum \limits_{i=0}^{N-1} y_{i+1}^T
\left( \Delta_{i+1} + B_i R_i^{-1} B_i^T \right) y_{i+1} \\
& + y_0^T (c_0 - x_0) - \frac{1}{2} y_0^T \Delta_0 y_0 .
\end{aligned}
\end{equation}

For $i \in \lbrace 0, \ldots, N - 1 \rbrace$, we let
\begin{equation}
\begin{aligned}
  \label{lqr-gpu-init-running}
  P_{i \rightarrow i+1} &= Q_i - M_i R_i^{-1} M_i^T, \\
  p_{i \rightarrow i+1} &= q_i - M_i R_i^{-1} r_i, \\
  A_{i \rightarrow i+1} &= A_i - B_i R_i^{-1} M_i^T, \\
  C_{i \rightarrow i+1} &= \Delta_{i+1} + B_i R_i^{-1} B_i^T, \\
  c_{i \rightarrow i+1} &= c_{i+1} - B_i R_i^{-1} r_i.
\end{aligned}
\end{equation}

Similarly, we let
\begin{equation}
\begin{aligned}
  \label{lqr-gpu-init-terminal}
  P_{N \rightarrow N+1} &= Q_N, \\
  p_{N \rightarrow N+1} &= q_N, \\
  A_{N \rightarrow N+1} &= 0, \\
  C_{N \rightarrow N+1} &= 0, \\
  c_{N \rightarrow N+1} &= 0.
\end{aligned}
\end{equation}

\begin{definition}
For $0 \leq i < j \leq N+1$, we define the interval value functions
\begin{equation*}
\begin{aligned}
V_{i \rightarrow j}(x_i, x_j) = &
\max \limits_{y_{i+1}, \ldots, y_j} \min \limits_{x_{i+1}, \ldots, x_{j-1}} \\
& \sum \limits_{k=i}^{j-1}
\left( \frac{1}{2} x_k^T P_{k \rightarrow k + 1} x_k
+ p_{k \rightarrow k+1}^T x_k \right) \\
+ & \sum \limits_{k=i}^{j-1} \left( y_{k+1}^T \left( A_{k \rightarrow k+1} x_k
+ c_{k \rightarrow k+1} - x_{k + 1} \right) \right) \\
- & \frac{1}{2} \sum \limits_{k=i}^{j-1} y_{k+1}^T C_{k \rightarrow k+1} y_{k+1} .
\end{aligned}
\end{equation*}
\end{definition}

Note that, by definition,
\begin{equation}
\begin{aligned}
V_i(x_i) = \min \limits_{x_{N+1}} V_{i \rightarrow N+1} (x_i, x_{N+1}).
\end{aligned}
\end{equation}

Since
\begin{equation*}
\begin{aligned}
\max \limits_{y_{N+1}} \min \limits_{x_{N+1}} -y_{N+1}^T x_{N+1} = 0,
\end{aligned}
\end{equation*}
achieved by forcing $x_{N+1} = 0$, it follows that
\begin{equation}
\begin{aligned}
V_i(x_i) = V_{i \rightarrow N+1} (x_i, 0).
\end{aligned}
\end{equation}

It was shown in~\cite{lqr-gpu} that the functions $V_{i \rightarrow j}$
admit representations of the form
\begin{equation*}
\begin{aligned}
V_{i \rightarrow j} (x_i, x_j) = \max \limits_{y} \left(
  \frac{1}{2} x_i^T P_{i \rightarrow j} x_i + p_{i \rightarrow j}^T x_i
  \right. \\
	\left. - \frac{1}{2} y^T C_{i \rightarrow j} y
  + y^T \left( A_{i \rightarrow j} x_i + c_{i \rightarrow j} - x_j \right) \right),
\end{aligned}
\end{equation*}
modulo constant additive terms that are independent of $x_i, x_j$ and can thus
be discarded.

The following combination rules, established in~\cite{lqr-gpu},
can be applied to compute $V_{i \rightarrow k}$
from $V_{i \rightarrow j}$ and $V_{j \rightarrow k}$:
\begin{equation}
\begin{aligned}
  \label{lqr-gpu-combo}
  P_{i \rightarrow k} =& A_{i \rightarrow j}^T \left( I + P_{j \rightarrow k} C_{i \rightarrow j} \right)^{-1}
	  P_{j \rightarrow k} A_{i \rightarrow j} + P_{i \rightarrow j}, \\
  p_{i \rightarrow k} =& A_{i \rightarrow j}^T \left( I + P_{j \rightarrow k} C_{i \rightarrow j} \right)^{-1}
	  \left( p_{j \rightarrow k} + P_{j \rightarrow k} c_{i \rightarrow j} \right) \\
    & + p_{i \rightarrow j}, \\
  A_{i \rightarrow k} =& A_{j \rightarrow k} \left( I + C_{i \rightarrow j}
	  P_{j \rightarrow k} \right)^{-1} A_{i \rightarrow j}, \\
  C_{i \rightarrow k} =& A_{j \rightarrow k} \left( I + C_{i \rightarrow j} P_{j \rightarrow k} \right)^{-1}
	  C_{i \rightarrow j} A_{j \rightarrow k}^T + C_{j \rightarrow k}, \\
  c_{i \rightarrow k} =& A_{j \rightarrow k} \left( I + C_{i \rightarrow j} P_{j \rightarrow k} \right)^{-1}
	  \left( c_{i \rightarrow j} - C_{i \rightarrow j} p_{j \rightarrow k} \right) \\
    & + c_{j \rightarrow k}. \\
\end{aligned}
\end{equation}

A reverse associative scan~\cite{ascan} can be used to compute the $P_{i \rightarrow N+1}, p_{i \rightarrow N+1}$
(i.e. the $P_i, p_i$ in~\cref{value-fn-formulas}) in $O(\log(N) \log (n)^2)$ via~\cref{lqr-gpu-combo}.
However, it is preferable to only include the computation of the $P_{i \rightarrow k}, A_{i \rightarrow k}, C_{i \rightarrow k}$
in the reverse associative scan (as the bare minimum required to extract the $P_{i \rightarrow N+1}$),
recovering the $p_{i \rightarrow N+1}$ at the end via~\cref{p-recurrence}.

Moreover, due to~\cref{inverse-helper-corollary}, we can also write
\begin{equation}
\begin{aligned}
P_{i \rightarrow k} =& A_{i \rightarrow j}^T P_{j \rightarrow k}
\left( I + C_{i \rightarrow j} P_{j \rightarrow k} \right)^{-1} A_{i \rightarrow j}
+ P_{i \rightarrow j}, \\
\end{aligned}
\end{equation}
resulting in a single matrix inversion per step, which provides a substantial speedup over~\cite{lqr-gpu}.

Next, the matrices $(I + \Delta_i P_i)^{-1}$ can be computed in $O(\log(n)^2)$ parallel time (i.e. $O(1)$ with respect to $N$).

We can then compute $x_0$ in $O(\log(n))$ parallel time via~\cref{recover-x0}
and the $K_i, k_i$ from~\cref{control-law} in $O(\log(n) + \log (m)^2)$ parallel time
(i.e. $O(1)$ with respect to $N$ in both cases).

Finally, we compute the $u_i, x_{i+1}$ from the $K_i, k_i$ via~\cref{recover-x} and~\cref{control-law}.
Note that the sequential LQR forward pass has $O(N)$ parallel time complexity.
However, as done in~\cite{lqr-gpu}, we can reduce the computation of
the $x_i$ to a sequential composition of affine functions, which can also be parallelized with an associative scan~\cite{ascan}
in $O(\log(N) \log(n))$ parallel time.
This is described in~\cref{composing-affine-functions}. By~\cref{main-seq-theorem},

\begin{equation*}
\begin{aligned}
x_{i+1} = & (I + \Delta_{i+1} P_{i+1})^{-1} \left( A_i x_i + B_i u_i + c_{i+1} - \Delta_{i+1} p_{i+1} \right) \\
= & (I + \Delta_{i+1} P_{i+1})^{-1} \\
& \left( A_i x_i + B_i \left( K_i x_i + k_i \right) + c_{i+1} - \Delta_{i+1} p_{i+1} \right) \\
= & (I + \Delta_{i+1} P_{i+1})^{-1} \left( A_i + B_i K_i \right) x_i \\
& + (I + \Delta_{i+1} P_{i+1})^{-1} ( B_i k_i + c_{i+1} - \Delta_{i+1} p_{i+1}) .
\end{aligned}
\end{equation*}

Once these affine functions have been composed, they can be independently applied to $x_0$ to recover all the $x_i$
in $O(\log(n))$ parallel time.
The $u_i$ can then be computed in $O(\log (m) + \log (n))$ parallel time
by independently evaluating $u_i = K_i x_i + k_i$.
Note that both of these parallel times are $O(1)$ with respect to $N$.

Thus, the combined parallel time complexity of our method is $O(\log(m)^2 + \log(N) \log(n)^2)$.

\subsection{Composing Affine Functions with Associative Scans}
\label{composing-affine-functions}

In this section, we describe an algorithm for composing
$N$ affine functions $F_i(x) = M_i x + m_i$, as done in~\cite{lqr-gpu},
with $O(\log(N) \log(n))$ parallel time.

Letting $\mathcal{X} = \mathbb{R}^{n} \times \mathbb{R}^{n \times n}$ and letting
$f: \mathcal{X} \times \mathcal{X} \rightarrow \mathcal{X}$ be defined by $f((a, B), (c, D)) = (Da + c, DB)$,
we claim that $f$ is associative. This is simple to verify:
\begin{equation*}
\begin{aligned}
  &f(f((a, B), (c, D)), (e, F)) = f((Da + c, DB), (e, F)) \\
= &(F(Da + c) + e, F(DB)) = ((FD)a + Fc + e, (FD)B) \\
= &f((a, B), (Fc + e, FD)) = f((a, B), f((c, D), (e, F))).
\end{aligned}
\end{equation*}

Moreover, note that $f$ is the affine function composition operator, as
$D(Bx + a) + c = (DB)x + (Da + c)$.
Thus, it suffices to apply a forward associative scan to $f$ to obtain all compositions
$F_0, F_1 \circ F_0, \ldots, F_{N-1} \circ \cdots \circ F_0$ in $O(\log(N) \log(n))$
parallel time.

\section{Inertia Certification}
\label{inertia-certification}

The inertia of the dual-regularized LQR first-order optimality matrix can be
certified using only the small matrices factored by the Riccati recursion. In
the sequential recursion, define the symmetric state pivots
\begin{equation*}
\begin{aligned}
S_i = P_i+\Delta_i^{-1}, \qquad i=0,\ldots,N,
\end{aligned}
\end{equation*}
where $P_i$ are the value-function Hessians from~\cref{value-fn-formulas}, and
let $G_i$ be the control pivots from~\cref{control-law}.
The matrices $I+\Delta_i P_i$ that appear in the Riccati formulas are generally
not symmetric; they are the left-scaled state pivots, since
$I+\Delta_i P_i = \Delta_i S_i$. For numerical certification, one may instead
use the symmetric, inverse-free representative
\begin{equation*}
\begin{aligned}
\widehat S_i
=\Delta_i^{1/2}S_i\Delta_i^{1/2}
= I+\Delta_i^{1/2}P_i\Delta_i^{1/2}.
\end{aligned}
\end{equation*}
The nonsymmetric matrix $I+\Delta_i P_i$ is similar to $\widehat S_i$:
\begin{equation*}
\begin{aligned}
I+\Delta_i P_i
= \Delta_i^{1/2} \widehat S_i \Delta_i^{-1/2}.
\end{aligned}
\end{equation*}
Thus, invertibility of $I+\Delta_i P_i$ is equivalent to invertibility of
$S_i$ and $\widehat S_i$, while positive-definiteness may be checked on either
symmetric matrix $S_i$ or $\widehat S_i$.

\begin{theorem}
\label{riccati-inertia-certification-theorem}
Assume $\Delta_i \succ 0$ for $i=0,\ldots,N$. The left-hand side of
the dual-regularized LQR first-order optimality conditions
in~\cref{primal-dual-linear-system} has inertia
\begin{equation*}
\begin{aligned}
(N(n+m)+n, (N+1)n, 0)
\end{aligned}
\end{equation*}
if and only if
\begin{equation*}
\begin{aligned}
S_i \succ 0 \quad (i=0,\ldots,N),
\qquad
G_i \succ 0 \quad (i=0,\ldots,N-1).
\end{aligned}
\end{equation*}
\end{theorem}

\begin{proof}
Let $K_{\mathrm{LQR}}$ denote the left-hand side matrix in
~\cref{primal-dual-linear-system}. By~\cref{sylvester-inertia-lemma},
\begin{equation*}
\begin{aligned}
\operatorname{In}(K_{\mathrm{LQR}})
= \operatorname{In}(-\Delta)
  + \operatorname{In}(P+C^T\Delta^{-1}C).
\end{aligned}
\end{equation*}
Since $\Delta \succ 0$, the desired inertia of $K_{\mathrm{LQR}}$ is
equivalent to
\begin{equation*}
\begin{aligned}
P+C^T\Delta^{-1}C \succ 0.
\end{aligned}
\end{equation*}

The sequential Riccati recursion is a block $LDL^T$ factorization of the
reduced primal Hessian, using the stage order encoded by the dynamics. At each
backward step, eliminating $x_{i+1}$ uses the pivot
$P_{i+1}+\Delta_{i+1}^{-1}$, and eliminating $u_i$ uses the pivot
$G_i$. The initial-state elimination uses the remaining pivot
$P_0+\Delta_0^{-1}$. These state pivots are precisely the matrices $S_i$.
By repeated application of Sylvester's law of inertia,
the inertia of $P+C^T\Delta^{-1}C$ is the sum of the inertias of the $S_i$
and $G_i$ pivots.
Therefore $P+C^T\Delta^{-1}C$ is positive-definite if and only if every
$S_i$ and every $G_i$ is positive-definite.
\end{proof}

The parallel Riccati recursion also factorizes the $\widehat S_i$ and $G_i$ matrices after the reverse associative scan,
so this inertia certificate is also conveniently available in that setting.

Thus, after a Riccati factorization, the desired inertia can be certified by
checking Cholesky factorizations of the $G_i$ and of the symmetric state
pivots $S_i$, or equivalently of the inverse-free representatives
$\widehat S_i$. If any check fails, the current Hessian
regularization is insufficient for the line-search descent guarantee from~\cref{inertia-al-descent-theorem}.

\section{Residual Computation}

The residuals of the Newton-KKT linear systems from~\cref{primal-dual-linear-system}
(when specialized to discrete-time optimal control problems as in~\cref{optimal-control-specialization})
are given by
\begin{equation*}
\begin{aligned}
\begin{bmatrix}
\begin{bmatrix}
Q_i x_i + M_i u_i + A_i^T y_{i+1} + q_i - y_i \\
M_i^T x_i + R_i u_i + B_i^T y_{i+1} + r_i \\
\end{bmatrix}_{i=0, \ldots, N-1} \\
Q_N x_N + q_N - y_N \\
c_0 - \Delta_0 y_0 - x_0 \\
\begin{bmatrix}
A_i x_i + B_i u_i + c_{i+1} - \Delta_{i+1} y_{i+1} - x_{i+1}
\end{bmatrix}_{i=0, \ldots, N-1}
\end{bmatrix}
\end{aligned}
\end{equation*}

Thus, the residuals can be computed in $O(\log(m) + \log(n))$ parallel time.
Residual computation is only necessary when applying iterative refinement to solve
~\cref{primal-dual-linear-system} to higher precision.

\section{Cross-Stage Variables}\label{cross-stage-variables}

The Riccati recursions derived above rely on the fact that, after the standard
interior-point eliminations, the only constraints coupling consecutive stages are
the dynamics. In some applications, however, it is useful to introduce a small
number of variables that are shared by every stage. We call these
\emph{cross-stage variables} and denote them by $\theta \in \mathbb{R}^{p}$.
For example, in the quadpendulum benchmark, a scalar $\theta$ represents a
clearance margin that appears in the obstacle-avoidance constraints at every
time step.

We do not fold $\theta$ into the state. Doing so would require artificial
dynamics $\theta_{i+1}=\theta_i$ and would increase the state dimension of every
Riccati step by $p$, even when $p$ is small. Instead, we keep $\theta$ as a
global variable and exploit the resulting arrowhead structure.

Let $w$ collect all stagewise primal and dual Newton variables after the
stagewise slack, equality, and inequality eliminations described in
\cref{application}. Without $\theta$, the Newton system has the
dual-regularized LQR form
\begin{equation*}
K \Delta w = -r_w,
\end{equation*}
where multiplication by $K^{-1}$ corresponds exactly to a Riccati solve. With cross-stage variables,
the same linearization has the block form
\begin{equation}
\label{cross-stage-kkt}
\begin{bmatrix}
K & J_{\theta} \\
J_{\theta}^{T} & H_{\theta\theta}
\end{bmatrix}
\begin{bmatrix}
\Delta w \\
\Delta \theta
\end{bmatrix}
= -
\begin{bmatrix}
r_w \\
r_{\theta}
\end{bmatrix}.
\end{equation}
Here $J_{\theta}$ contains the derivatives with respect to $\theta$ of the
stage costs, dynamics, equalities, and inequalities after the same
interior-point eliminations have been applied, and $H_{\theta\theta}$ is the
second derivative of the Lagrangian with respect to $\theta$.

We solve~\cref{cross-stage-kkt} by forming the dense Schur complement in the
cross-stage variables:
\begin{equation}
\label{cross-stage-schur}
S_{\theta} = H_{\theta\theta} - J_{\theta}^{T} K^{-1} J_{\theta}.
\end{equation}
The required products with $K^{-1}$ are computed by reusing the Riccati
factorization: one solve is applied to $r_w$, and $p$ additional solves are
applied to the columns of $J_{\theta}$. Then
\begin{equation*}
\begin{aligned}
\Delta \theta
&= -S_{\theta}^{-1}
\left(r_{\theta} - J_{\theta}^{T} K^{-1} r_w\right), \\
\Delta w
&= -K^{-1} (r_w + J_{\theta}\Delta\theta).
\end{aligned}
\end{equation*}
Since $p$ is typically much smaller than the number of stagewise variables, the
extra dense factorization is negligible compared with the Riccati solves. In
particular, the sequential implementation requires $p+1$ Riccati solves plus an
$O(p^3)$ dense solve, while the parallel implementation can compute the $p$
column solves independently once the Riccati factorization is available.

This preserves the main advantage of the optimal-control-specific linear solve:
the large block $K$ remains an LQR-structured system, while the nonlocal
dependence introduced by $\theta$ is isolated in a small dense Schur complement.
It also keeps the modeling interface simple: stage cost, dynamics, equality, and
inequality callbacks may depend on $(x_i,u_i,\theta,i)$, with $\theta$ passed
unchanged to every stage.

\section{Software Contributions}\label{software}

We provide JAX implementations of both the sequential and parallel
dual-regularized LQR algorithms~\cite{rlqr-jax},
including unit tests that verify the correctness of the computed solutions
on random examples satisfying the required definiteness properties.

Furthermore, we released Primal-Dual LIPA~\cite{lipa}, a JAX implementation of
a regularized interior-point solver for discrete-time constrained optimal control
problems based on the methods described in this paper.

Existing modern interior-point solvers, such as IPOPT~\cite{ipopt}, support several generic
linear system solvers, but are not designed to accommodate linear solvers specialized
to specific problem types (e.g. optimal control problems). In fact, until recently, IPOPT
installations did not include all of the required headers for users to integrate any
custom user-side linear system solvers, which hindered the use of specialized
linear system solvers.

To address this limitation, we released a simple regularized interior-point solver
in~\cite{sip}, which allows users to register callbacks that specialize certain operations to their
problem type, including:
\begin{itemize}
\item a KKT system factorization callback;
\item a KKT system solve callback;
\item a KKT system residual computation callback.
\end{itemize}

This splits the solver into a shared backend and a problem-specific frontend.
We do exactly that for the case of optimal control in~\cite{sipoc}.

In addition, we implemented an integration with QDLDL~\cite{osqp} supporting arbitrary
user-provided KKT permutations (for optimal fill-in prevention) in~\cite{sip-qdldl},
resulting in an efficient sparse interior-point solver that avoids dense operations entirely.
Alternatively, a sparse linear algebra code generation library, such as SLACG~\cite{slacg},
can be used to solve the KKT systems and compute their residuals; this often achieves substantial
speedups for sparse problems, as no index computations are performed at runtime.

Usage examples can be found in~\cite{sipex}.
With correct usage, all of these implementations avoid dynamic memory allocation.

All of the provided packages are free and open-source (MIT-licensed).

\section{Benchmarks}\label{benchmarks}

We benchmarked the JAX implementation of Primal-Dual LIPA~\cite{lipa}
and the Python bindings for SIP~\cite{sip-python} against established
optimal-control and nonlinear-programming solvers:
acados~\cite{acados}, aligator~\cite{aligator}, CSQP~\cite{csqp},
Fatrop~\cite{fatrop}, IPOPT~\cite{ipopt}, and Trajax~\cite{trajax}.
All solvers were run on the same multiple-shooting formulation of each test problem,
with optimal problem-dependent settings. The benchmark harness records
only the timed solve call: JAX compilation, CasADi graph construction,
code generation, and one-time solver setup are excluded.

\begin{figure*}[t]
\centering
\begin{minipage}[t]{0.49\textwidth}
\centering
\includegraphics[width=\linewidth]{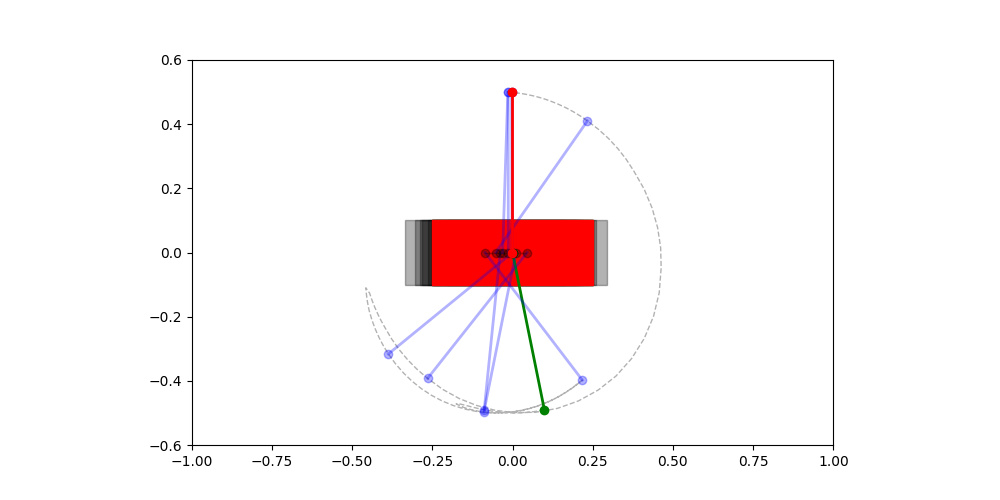}
\end{minipage}
\hfill
\begin{minipage}[t]{0.49\textwidth}
\centering
\includegraphics[width=\linewidth]{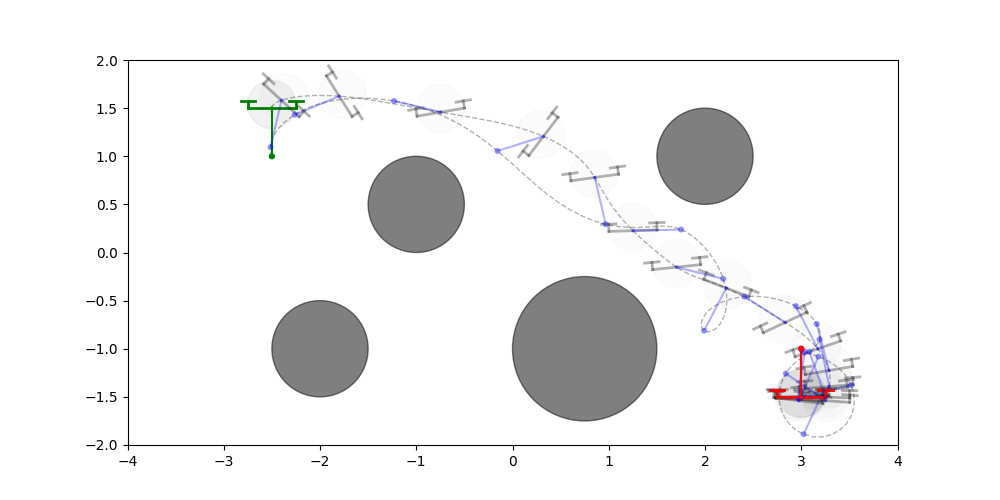}
\end{minipage}
\caption{Representative optimized trajectories from the analytical benchmark
set. Left: bounded-actuation cartpole swing-up. Right: quadpendulum navigation
with obstacle-avoidance constraints. Faded configurations show intermediate
states along each optimized trajectory.}
\label{fig:benchmark-trajectories}
\end{figure*}

\begin{figure*}[t]
\centering
\includegraphics[width=\textwidth]{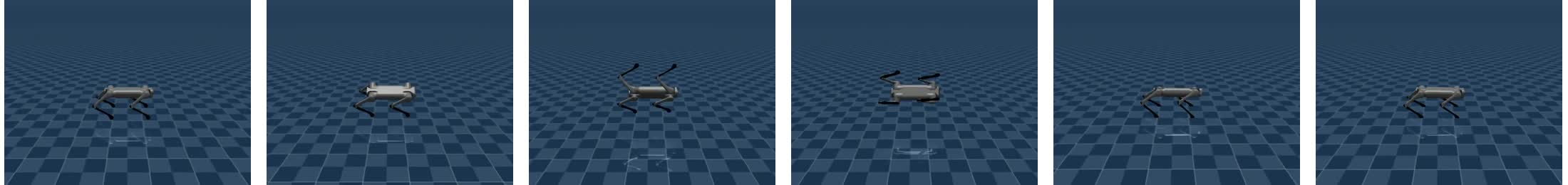}\\[0.4em]
\includegraphics[width=\textwidth]{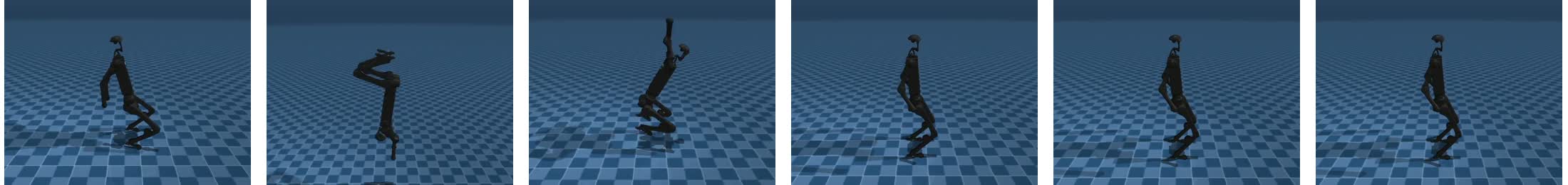}
\caption{Uniformly sampled frames from optimized MJX whole-body trajectories.
Top: quadruped barrel roll. Bottom: humanoid backflip.}
\label{fig:mjx-trajectory-frames}
\end{figure*}

\begin{table}[H]
\caption{Benchmark problems. Here $T$ is the horizon length, $n$ and $m$
are the state and control dimensions, $n_\theta$ is the number of
cross-stage variables, and $n_g$ is the number of stagewise inequality
constraints.}
\label{tab:benchmark-problems}
\centering
\scriptsize
\begin{tabular}{lrrrrl}
\toprule
Problem & $T$ & $n$ & $m$ & $n_\theta$ & Constraints \\
\midrule
Cartpole & 50 & 4 & 1 & 0 & terminal goal, $n_g=2$ \\
Acrobot & 50 & 4 & 1 & 0 & dynamics only \\
Quadpendulum & 200 & 8 & 2 & 0 & terminal goal, $n_g=48$ \\
Quadpendulum-$\theta$ & 200 & 8 & 2 & 1 & terminal goal, $n_g=48$ \\
Barrel roll & 100 & 61 & 12 & 0 & MJX dynamics, $n_g=52$ \\
Backflip & 130 & 63 & 31 & 0 & MJX dynamics, $n_g=54$ \\
Double jump & 100 & 63 & 31 & 0 & MJX dynamics, $n_g=54$ \\
Trotting & 92 & 61 & 12 & 0 & MJX dynamics, $n_g=52$ \\
\bottomrule
\end{tabular}
\end{table}

\begin{table}[H]
\caption{Analytical-problem iteration counts. Entries are
``iterations status'', where ``ok'' denotes convergence, ``x'' denotes
that the solver ran but did not satisfy the benchmark success criterion,
``timeout'' denotes a subprocess hard timeout, and ``--'' denotes an
unsupported problem class.}
\label{tab:benchmark-analytical-iters}
\centering
\scriptsize
\begin{tabular}{lrrrr}
\toprule
Solver & Cartpole & Acrobot & Quadpendulum & Quadpendulum-$\theta$ \\
\midrule
LIPA-GPU & 84 ok & 108 ok & 81 ok & 143 ok \\
LIPA-CPU & 82 ok & 108 ok & 81 ok & 143 ok \\
SIP-CasADi & 231 ok & 68 ok & 231 ok & 198 ok \\
SIP-JAX & 231 ok & 68 ok & 123 ok & 198 ok \\
IPOPT-CasADi & 33 ok & 21 ok & 65 ok & 103 ok \\
IPOPT-JAX & 46 ok & 21 ok & 163 ok & 82 ok \\
acados & 68 ok & 99 ok & 31 ok & -- \\
Fatrop-CasADi & 80 ok & 17 ok & 112 ok & -- \\
Fatrop-JAX & 99 ok & 17 ok & 112 ok & -- \\
CSQP-CasADi & 115 ok & 74 ok & 1000 x & -- \\
CSQP-JAX & 113 ok & 74 ok & 1000 x & -- \\
aligator-CasADi & 101 ok & 55 ok & 1000 x & -- \\
aligator-JAX & 114 ok & 55 ok & timeout & -- \\
Trajax & 310 ok & 86 ok & 237 ok & -- \\
\bottomrule
\end{tabular}
\end{table}

\begin{table}[H]
\caption{Analytical-problem benchmark results. Entries are solve times in
seconds; ``x'' indicates that the solver ran but did not satisfy the
benchmark success criterion, and ``--'' indicates that the adapter does
not support the problem class.}
\label{tab:benchmark-analytical}
\centering
\scriptsize
\begin{tabular}{lrrrr}
\toprule
Solver & Cartpole & Acrobot & Quadpendulum & Quadpendulum-$\theta$ \\
\midrule
LIPA-GPU & 0.134 & 0.136 & 0.171 & 0.351 \\
LIPA-CPU & 0.092 & 0.051 & 0.668 & 1.358 \\
SIP-CasADi & 1.051 & 0.298 & 6.328 & 5.725 \\
SIP-JAX & 2.379 & 0.396 & 4.164 & 7.457 \\
IPOPT-CasADi & 0.058 & 0.034 & 4.000 & 6.769 \\
IPOPT-JAX & 5.638 & 1.924 & 95.391 & 51.383 \\
acados & 0.057 & 0.012 & 0.527 & -- \\
Fatrop-CasADi & 0.041 & 0.030 & 1.596 & -- \\
Fatrop-JAX & 37.155 & 4.265 & 189.991 & -- \\
CSQP-CasADi & 3.929 & 0.570 & 315.738x & -- \\
CSQP-JAX & 22.325 & 7.517 & 1077.415x & -- \\
aligator-CasADi & 1.299 & 0.515 & 154.943x & -- \\
aligator-JAX & 16.404 & 4.989 & 360.529x & -- \\
Trajax & 5.396 & 0.876 & 12.248 & -- \\
\bottomrule
\end{tabular}
\end{table}

\Cref{tab:benchmark-problems} summarizes the test set. The first four
problems have analytical dynamics and CasADi mirrors, allowing both
symbolic and JAX-backed adapters to be tested. Cartpole is a swing-up
with bounded actuation; acrobot is an unconstrained swing-up;
quadpendulum is a planar quadrotor-pendulum navigation problem with
world-boundary, obstacle-avoidance, attitude, and thrust constraints; and
quadpendulum-$\theta$ adds a global clearance-margin variable shared
across stages. The analytical examples are modified from earlier
examples in~\cite{trajax}. The last four problems use MuJoCo MJX dynamics for
whole-body robotics: a quadruped barrel roll, a humanoid backflip, a
humanoid double jump, and a quadruped trot. These problems include
contact, friction, torque, and joint-velocity constraints, and are the
main test of the JAX/GPU implementation; they are modified from the
examples in~\cite{pd-ilqr-legged}. Representative MJX optimized
trajectories are shown in~\cref{fig:mjx-trajectory-frames}.

A solve is marked successful when the adapter reports convergence and
the returned trajectory satisfies the benchmark tolerance: $10^{-6}$ on
the analytical problems and $10^{-3}$ on the MJX problems. Equality
violation is the infinity norm of the initial-state defect, dynamics
defects, and user equalities; inequality violation is
$\|\max(0,g)\|_\infty$. The benchmark harness also recomputes a common
KKT residual decomposition when an adapter exposes compatible
multipliers, but we report wall-clock time and success status here
because multiplier availability and sign conventions differ across
packages.

\begin{table}[H]
\centering
\caption{MJX whole-body iteration counts. Entries follow the convention
of~\cref{tab:benchmark-analytical-iters}.}
\label{tab:benchmark-mjx-iters}
\scriptsize
\begin{tabular}{lrrrr}
\toprule
Solver & Barrel & Backflip & Jump & Trot \\
\midrule
LIPA-GPU & 70 ok & 476 ok & 223 ok & 29 ok \\
SIP-JAX & 186 ok & timeout & timeout & 55 ok \\
IPOPT-JAX & 388 x & 211 x & 285 x & 117 ok \\
Fatrop-JAX & timeout & timeout & timeout & 124 ok \\
CSQP-JAX & 200 x & 200 x & 400 x & 212 x \\
Trajax & timeout & 196 ok & 1027 ok & timeout \\
\bottomrule
\end{tabular}
\end{table}

\begin{table}[H]
\centering
\caption{MJX whole-body benchmark results. Entries are solve times in
seconds from the merged report; ``x'' marks failures or timeouts.}
\label{tab:benchmark-mjx}
\scriptsize
\begin{tabular}{lrrrr}
\toprule
Solver & Barrel & Backflip & Jump & Trot \\
\midrule
LIPA-GPU & 3.25 & 31.69 & 12.96 & 1.47 \\
SIP-JAX & 102.20 & 2400.73x & 2400.60x & 25.62 \\
IPOPT-JAX & 1201.63x & 1202.47x & 1203.48x & 324.95 \\
Fatrop-JAX & 2400.40x & 2400.53x & 2400.40x & 1494.62 \\
CSQP-JAX & 352.01x & 656.88x & 948.53x & 312.34x \\
Trajax & 2400.51x & 240.76 & 1043.63 & 2400.60x \\
\bottomrule
\end{tabular}
\end{table}

\Cref{tab:benchmark-analytical-iters,tab:benchmark-analytical} show that,
on the smallest analytical problems, specialized CPU solvers with symbolic
models can be faster than LIPA. On the larger constrained quadpendulum
problem, however, LIPA-GPU solves the instance in $0.171$ seconds, compared to
$1.596$ seconds for Fatrop-CasADi, $4.000$ seconds for IPOPT-CasADi,
and $4.164$ seconds for SIP-JAX. LIPA is also one of the few solvers in
the benchmark that supports the cross-stage variable in
quadpendulum-$\theta$.

\Cref{tab:benchmark-mjx-iters,tab:benchmark-mjx} show a larger separation on the MJX
whole-body tasks. LIPA-GPU is the only solver in the comparison that
successfully solves all four MJX problems. The times reported in the merged
report are $3.25$ seconds for the quadruped barrel roll, $31.69$ seconds
for the humanoid backflip, $12.96$ seconds for the humanoid double
jump, and $1.47$ seconds for trotting. Across five independent LIPA-GPU
MJX repeats, the corresponding median solve times were $3.20$,
$29.09$, $12.96$, and $1.36$ seconds. The remaining solvers either
timed out on at least two of the four MJX tasks or converged only on
the easier instances.

Overall, these results support two conclusions.
First, pairing the regularized primal-dual interior-point method
with a dual-regularized LQR Newton-KKT solver
delivers robust performance on complex trajectory optimization problems,
even in the presence of stagewise equality and inequality constraints.
Second, the GPU-accelerated version of the solver enables substantial timing gains
as the horizon length, problem dimension, and constraint count increase.

\section{Conclusion}\label{conclusion}

We showed that the regularized interior-point method provides a practical foundation
for fast constrained discrete-time optimal control via multiple-shooting.
Its advantages include the following:
\begin{itemize}
\item it poses Newton-KKT systems that can be reduced to dual-regularized LQR problems,
  and thus solved highly efficiently, both sequentially and in parallel;
\item it is extremely simple to implement;
\item it provides robust handling of equality and inequality constraints;
\item it imposes no rank requirements on constraint Jacobians;
\item it efficiently supports cross-stage variables;
\item it is guaranteed to descend the augmented barrier-Lagrangian merit function;
\item it does not suffer from the Maratos effect, and thus avoids second-order corrections.
\end{itemize}

Our open-source implementations and benchmarks show that this method achieves competitive performance
across a wide set of complex trajectory optimization problems, with a clear advantage on larger problems.

\bibliographystyle{siamplain}
\bibliography{references}

\end{document}